\documentclass[11pt, a4paper, reqno]{amsart}

\usepackage[utf8]{inputenc}
\usepackage{amsmath,amssymb}
\usepackage{graphicx}
\usepackage{indentfirst,csquotes}
\usepackage{pdflscape} 
\usepackage{hhline} 
\usepackage{array} 
\usepackage{calrsfs}
\DeclareMathAlphabet{\pazocal}{OMS}{zplm}{m}{n}
\usepackage{latexsym,amsmath,amsfonts,amscd,amssymb, mathrsfs, amsthm}
\usepackage[english]{babel}
\usepackage{appendix}
\usepackage{mathtools}
\usepackage{booktabs}
\usepackage{natbib}
\usepackage{hyperref}
\hypersetup{
    colorlinks=true,
    linkcolor=blue,
    urlcolor=blue,
    citecolor=blue
}
\advance\oddsidemargin by -0.5cm
\advance\evensidemargin by -0.5cm
\advance\topmargin by -1.5cm
\def\mxl{\left[ \begin{array}}  \def\mxr{\end{array} \right]}
\def\detl{\left| \begin{array}}  \def\detr{\end{array} \right|}
\def\bmx{\mxl{cccc}} \def\emx{\mxr}
\def\mx#1{\mxl{cccc}#1\mxr}

\def\bc{\begin{center}}          \def\ec{\end{center}}
\def\ben{\begin{enumerate}}      \def\een{\end{enumerate}}
\def\beq{\begin{equation}}       \def\eequ{\end{equation}}
\def\beqn{\begin{eqnarray*}} \def\eeqn{\end{eqnarray*}}
\def\eqs{\vspace{-10pt}\begin{equation}} \def\eqe{\vspace{-5pt}\end{equation}}
\def\bal{\begin{align}}           \def\eal{\end{align}}
\def\ben{\begin{enumerate}}       \def\een{\end{enumerate}}
\def\bit{\begin{itemize}}           \def\eit{\end{itemize}}
\def\btabb{\begin{tabbing}}         \def\etabb{\end{tabbing}}
\def\btab{\begin{tabular}}          \def\etab{\end{tabular}}
\def\barr{\begin{array}}           \def\earr{\end{array}}
\def\earrb{\end{array} \right\}}

\def\FF{\mathbb{F}}

 \def\imp{\Rightarrow}

\newcommand{\ovl}[1]{\overline{#1}}

\def\bf{\textbf}

\def\0{\bf{0}}    

\font\Bbb=msbm10 
\def\A{\ifmmode{\mbox{\Bbb A}}\else{{\Bbb A}}\fi}
\def\B{\ifmmode{\mbox{\Bbb B}}\else{{\Bbb B}}\fi}
\def\R{\ifmmode{\mbox{\Bbb R}}\else{{\Bbb R}}\fi}
\def\C{\ifmmode{\mbox{\Bbb C}}\else{{\Bbb C}}\fi}

\def\G{\ifmmode{\mbox{\Bbb G}}\else{{\Bbb G}}\fi}
\def\N{\ifmmode{\mbox{\Bbb N}}\else{{\Bbb N}}\fi}
\def\Z{\ifmmode{\mbox{\Bbb Z}}\else{{\Bbb Z}}\fi}
\def\K{\ifmmode{\mbox{\Bbb K}}\else{{\Bbb K}}\fi}
\def\L{\ifmmode{\mbox{\Bbb L}}\else{{\Bbb L}}\fi}
\def\D{\ifmmode{\mbox{\Bbb D}}\else{{\Bbb D}}\fi}
\def\T{\ifmmode{\mbox{\Bbb T}}\else{{\Bbb T}}\fi}
\def\I{\ifmmode{\mbox{\Bbb I}}\else{{\Bbb I}}\fi}
\def\E{\ifmmode{\mbox{\Bbb E}}\else{{\Bbb E}}\fi}
\def\K{\ifmmode{\mbox{\Bbb K}}\else{{\Bbb K}}\fi}
\def\Q{\ifmmode{\mbox{\Bbb Q}}\else{{\Bbb Q}}\fi}
\def\P{\ifmmode{\mbox{\Bbb P}}\else{{\Bbb P}}\fi}

\def\cG{\ifmmode{\mathcal{G}}\else{{$\mathcal{G}$}}\fi} 
\def\cA{\ifmmode{\mathcal{A}}\else{{$\mathcal{A}$}}\fi}

\def\op{ \ifmmode{\oplus} \else{\leavevmode\hbox{$\oplus$}\fi } }

\def\ot{\ifmmode{\otimes}\else{$\!\!\otimes$}\fi}
\def\od{ \ifmmode{\odot} \else{\leavevmode\hbox{$\odot$}\fi } }

\def\adots{\mathinner{\raise 1pt\hbox{.}\mkern1mu\raise4 pt\hbox{.}\mkern2mu
  \mkern1mu\raise7 pt\vbox{\kern7 pt\hbox{.}}\mkern2mu}}

\def\bdefn{\begin{defn}} \def\edefn{\end{defn}}
\def\bex{\begin{ex}} \def\eex{\end{ex}}
\def\beg{\begin{eg}} \def\eeg{\end{eg}}
\def\blem{\begin{lem}} \def\elem{\end{lem}}
\def\bth{\begin{thm}} \def\eth{\end{thm}}
\def\bprop{\begin{prop}} \def\eprop{\end{prop}}
\def\bcor{\begin{cor}} \def\ecor{\end{cor}}

\newcommand{\core}{\mathrel{^{\raisebox{0.6pt}{\scalebox{0.7}{\textcircled{\scriptsize $\sharp$}}}}}}

\newtheorem{thm}{Theorem}[section]  
\newtheorem{lem}{Lemma}[section]
\newtheorem{cor}{Corollary}[section]  \newtheorem{prop}{Proposition}[section]
\newtheorem{eg}{Example}[section]    
\newtheorem{ex}{Exercise}[section]
\newtheorem{defn}{Definition}[section]
 \numberwithin{equation}{section}

\def\ball{\begin{align}}  \def\eall{\end{align}}

\newcommand{\coreEP}{\mathrel{^{\raisebox{0.5pt}{\scalebox{0.6}{\textcircled{\scriptsize d}}}}}}

\def\bbin{\left( \begin{array}{c}} \def\ebin{\end{array} \right)}

\begin{document}

\title[Core-EP-Type Inverses for Structured Matrices]{Core EP, Dual Core EP and Composite Generalized Inverses for a Class of Structured Matrices}
\author[Faustino Maciala]{Faustino Maciala}
\address{Faustino Maciala, CMAT -- Centro de Matemática Universidade do Minho 4710-057 Braga Portugal;  Departamento de Ciências da Natureza e Ciências Exatas do Instituto Superior de Ciências da Educação de Cabinda -- ISCED-Cabinda, Angola.}
\email{fausmacialamath@hotmail.com}

\author[C. Mendes Araújo]{C. Mendes Araújo}
\address{C. Mendes Araújo, CMAT -- Centro de Matemática and Departamento de Matemática Universidade do Minho 4710-057 Braga Portugal}
\email{clmendes@math.uminho.pt}

\author[Pedro Patrício]{Pedro Patrício}
\address{Pedro Patrício, CMAT -- Centro de Matemática and Departamento de Matemática Universidade do Minho 4710-057 Braga Portugal}
\email{pedro@math.uminho.pt}


\date{\today}



\begin{abstract}
We study generalized inverses for matrices associated with double star digraphs. Explicit block formulas and existence criteria are obtained for core, dual core,
core EP, and dual core EP inverses, expressed in terms of explicit algebraic criteria derived from the underlying block structure. Other combined outer pseudoinverses, combining Moore--Penrose and core-type inverses, are derived with existence criteria.
\end{abstract} 

\subjclass[2020]{15A09, 05C20, 05C50}
\keywords{Drazin inverse, Moore--Penrose inverse, core-type inverses, core EP-type inverses, MPCEP inverse, $^\ast\!$CEPMP inverse, GDC inverse, GC inverse, digraph}

\maketitle

\section{Introduction}

Generalized inverses are standard tools for treating singular dynamics, constrained least squares problems, and spectral decompositions in linear and networked systems. The Drazin inverse $A^{D}$ extends inversion to arbitrary index, while the Moore--Penrose inverse $A^{\dagger}$ provides $*$-orthogonal projections and optimality properties. The reader is referred to \cite{benisraelgreville, campbell1979} for results on generalized inverses over the field of complex numbers, and to \cite{ChenBook} for a more recent approach to the algebraic theory of generalized inverses.

In graph matrix theory, the interaction between combinatorics and linear algebra has led to explicit formulas for inverses and generalized inverses of matrices associated with several graph families (trees, wheels, bipartite and path based classes), typically via full rank factorizations and Schur complements (\cite{Balaji,bookBapat,Bapat,Catral-grp,Catral2,Godsil,McLeman,McDonald}). 

A wide range of generalized inverses has been introduced in the literature as extensions of both the Moore--Penrose inverse and the Drazin inverse. These generalizations are typically obtained by modifying certain defining equations or characterization conditions, while selectively preserving, relaxing, or omitting others. Core-type generalized inverses and their variants have been extensively studied in rings with involution and in matrix/operator settings, including pseudo-core versions, core inverse for $2\times 2$ matrices and for certain products, and several recent refinements (\cite{Chen2020,gao2018pseudo,JiWei-coreEP2021,ke2019core,Mosic2024,MosaicMarovt-weighted-coreEP,Mosic2021,Stanimirovic,wang2019right,WuEtAl2024}). DMP and MPD inverses were introduced and analyzed by Malik and Thome (\cite{Malik2014}). 

We focus on a family of block-structured matrices associated with double star digraphs, where two hub-periphery components are coupled through their central vertices. Despite their simple form, these matrices admit a rich invertibility structure, allowing explicit characterizations and closed-form expressions for several generalized inverses via Schur complement and full rank factorization techniques.

In this paper we address generalized invertibility of matrices associated with \emph{double star digraphs}, namely we study:
(i) core-type inverses based on the existence of Moore--Penrose and group inverses;
(ii) core EP and dual core EP inverses using $(1,3)$-- and $(1,4)$--inverses criteria and full rank factorizations, with explicit block formulas; and
(iii) composite outer inverses, namely the Moore--Penrose--core EP inverse (MPCEP), the $^\ast\!$core EP--Moore--Penrose inverse ($^\ast\!$CEPMP), generalized dual core (GDC) inverse and  generalized core (GC) inverse, deriving block formulas and existence conditions for these compositions.

The paper is organized as follows. Section \ref{sec:prelim} covers preliminaries: double star digraphs and associated matrices, Drazin and Moore--Penrose inverses, $(1,3)$-- and $(1,4)$--inverses, and the Gram/Schur framework. Section \ref{sec:core} treats core and dual core inverses, with existence conditions and block formulas. Section \ref{sec:coreEP} develops core EP and dual core EP inverses, providing case-wise existence criteria and block expressions. Section \ref{sec:mpcep-cepmp} derives the compositions of Moore--Penrose and core-type inverses MPCEP and $^\ast\!$CEPMP, while Section \ref{sec:gdcdc-block} covers the generalized core and dual core inverses, both including existence conditions and block formulas. Finally, Section \ref{sec:conditions} provides a summary table collecting the existence conditions for all generalized inverses considered in this paper, restricted to the nontrivial cases in which the group inverse does not exist and the Drazin inverse is nonzero.

 \section{Preliminaries}\label{sec:prelim}

All our matrices are over a general field $\FF$ with involution $\ovl{\cdot}$. This involution on $\FF$ induces an involution $*$ on the set of finite matrices over $\FF$, defined as follows: if  $A=[a_{ij}],$ then  $A^*=[a_{ij}]^*=[\overline{a_{ji}}]$. Also, $\overline{A}=\overline{[a_{ij}]}=[\overline{a_{ij}}].$ 

Let $K_{1,m}$ and $K_{1,n}$ be two complete bipartite digraphs with central vertices $u$ and $v$, and vertex sets $\{u\}\cup U$, $U=\{u_1,\dots,u_m\}$, and $\{v\}\cup V$, $V=\{v_1,\dots,v_n\}$, respectively. The \emph{double star digraph} $S_{(m+1),(n+1)}$ is defined as the digraph obtained by joining the central vertices $u$ and $v$ with two directed edges, one in each direction. Equivalently, the vertex set of $S_{(m+1),(n+1)}$ is $U \cup V \cup \{u,v\}$, and it contains all directed edges of $K_{1,m}$ and $K_{1,n}$; that is, edges from $u$ to each $u_i$ and from each $u_i$ to $u$, and edges from $v$ to each $v_j$ and from each $v_j$ to $v$. In addition, the edges $u\to v$ and $v\to u$ are included, thus completing the structure of the double star digraph. An example of a double star digraph is shown in Figure \ref{fig:DSG}.

\begin{figure}[htbp]
\centering
\includegraphics[width=0.5\textwidth]{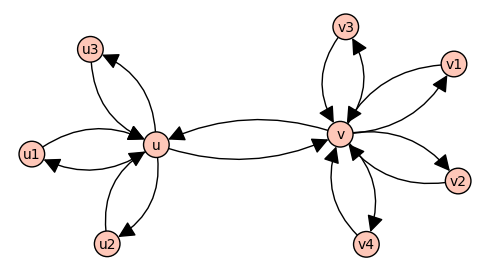}
\caption{Double star digraph $S_{(m+1),(n+1)}$ with $m=3$ leaves on the first star (center $u$) and $n=4$ leaves on the second star (center $v$).}
\label{fig:DSG}
\end{figure}

Given a square matrix $A = [a_{ij}]$ of order $n$, the \emph{associated digraph} $D(A) = (V,E)$ is defined to have vertex set $V = \{w_1,\ldots,w_n\}$ and edge set $E$, where $(w_i, w_j) \in E$ if and only if $a_{ij} \neq 0$. The entry $a_{ij}$ may be regarded as the weight of the directed edge from $w_i$ to $w_j$, and it is equal to zero precisely when such an edge is absent.

The block structure of any matrix associated with a double star digraph reflects the decomposition of the digraph into its two star components: each star contributes a diagonal block corresponding to the interactions between its central vertex and its pendant vertices, while the two directed edges joining the central vertices appear as off-diagonal blocks connecting these components.

Moreover, when different matrices are constructed from a double star digraph, their entries depend both on the chosen ordering of the vertices and on the weights assigned to its edges. However, whenever two such matrices correspond to the same weighted double star digraph -- that is, the edge weights are preserved -- they are permutation similar. Specifically, if $A$ and $B$ arise from two distinct vertex orderings but encode the same weighted digraph, then there exists a permutation matrix $P$ such that $A = P^{-1} B P$. We shall denote this relation by $A\underset{\text{perm.}}{\approx}B$. Since $P^{*} = P^{-1}$, this is a particular instance of unitary similarity. As a consequence, properties such as the existence of the Drazin inverse, the group inverse, the Moore--Penrose inverse, and the remaining pseudoinverses addressed in this study are independent of the ordering chosen for the vertices. 

\renewcommand{\arraystretch}{1.25}
Let $x$ and $y$ be vectors of length $m$, and $z$ and $w$ vectors of length $n$, each having all entries nonzero. Consider the square matrix of order $m+n+2$ defined by
\begin{equation}\label{eq:double-star-canonical}
M=  \mxl{cc|cc}
0 & x^T & a & 0\\
y & 0 & 0 & 0\\ \hline
b & 0 & 0 & z^T\\
0 & 0 & w & 0
\mxr,\end{equation}
where $a$ and $b$ are nonzero elements. Then the digraph $D(M)$ corresponds to a double star digraph $S_{(m+1),(n+1)}$ and any matrix $A$ with associated digraph $D(A)=S_{(m+1),(n+1)}$ is permutation similar to a matrix of the form described in \eqref{eq:double-star-canonical}.

For later use, we introduce the scalars  
\begin{equation}
\label{eq:all_scalar}
\begin{split}
& s := x^\ast x,\quad t := z^\ast z,\quad u := y^\ast y,\quad v := w^\ast w,\quad r := a\ovl{a} + w^\ast w, \\
& h := b\ovl{b} + y^\ast y,\quad p := b\ovl{b} + z^\ast z,\quad q := a\ovl{a} + x^\ast x,\quad \zeta := x^T y + ab, \\
& \beta := \zeta\ovl{\zeta} + b\ovl{b}\,w^\ast w,\quad  \alpha := \zeta\ovl{\zeta} + a\ovl{a}\,z^\ast z.
\end{split}
\end{equation}
These parameters play a central role in the Schur complement and Gram-type reductions that determine the rank behaviour and singularity patterns of matrices associated with double star digraphs. In particular, the special case $\zeta = 0$ induces structurally singular configurations in which several generalized inverse formulas simplify substantially.

As mentioned above, any matrix associated with a given double star digraph $S_{(m+1),(n+1)}$ is unitarily similar to a matrix in the canonical block form \eqref{eq:double-star-canonical}. Consequently, no generality is lost by performing the analysis exclusively on matrices of this form. In particular, all issues related to Moore--Penrose, Drazin, $(1,3)$--, and $(1,4)$--invertibility may be treated within this canonical representation.

The Drazin inverse $A^{D}$ of a square matrix $A$ over an arbitrary field is the unique matrix satisfying
\[
A^{\,k+1}A^{D}=A^{\,k}, \qquad
A^{D}AA^{D}=A^{D}, \qquad
AA^{D}=A^{D}A,
\]
where $k$ denotes the Drazin index of $A$, written $i(A)$. The Drazin index agrees with the algebraic index determined by the minimal polynomial. Equivalently, $i(A)$ is the smallest nonnegative integer $k$ such that
\[
\operatorname{rank}(A^{k})=\operatorname{rank}(A^{k+1}).
\]
The group inverse $A^{\#}$ is the special case of the Drazin inverse corresponding to matrices of index at most one, in Refs. \cite{Cline1965,Cline1968}.

In \cite{McDonald} the authors establish necessary and sufficient conditions for the existence of the group inverse of a matrix of the form \eqref{eq:double-star-canonical}. Specifically, $M^{\#}$ exists if and only if $x^{T}y \neq 0$ and $z^{T}w \neq 0$. In this case, the group inverse is explicitly given by
\begin{equation}
\label{eq:Msharp}
M^\sharp = 
\mxl{cc|cc}
0 & (x^T y)^{-1}x^T & 0 & 0 \\
(x^T y)^{-1}y & 0 & 0 & -a[(x^T y)(z^T w)]^{-1}yz^T \\ \hline
0 & 0 & 0 & (z^T w)^{-1}z^T \\
0 & -b[(x^T y)(z^T w)]^{-1}wx^T & (z^T w)^{-1}w & 0
\mxr.
\end{equation}

When $M$ fails to admit a group inverse (that is, when $x^{T}y = 0$ or $z^{T}w = 0$) the analysis must be carried out within the framework of Drazin invertibility, see \cite{Dragana}. In this situation, and without loss of generality, the study reduces to three mutually exclusive and exhaustive cases (see \cite{maciala2025characterization, araujo2026}), which constitute the structural basis for the pseudoinvertibility results developed in the subsequent sections. These cases are stated below.

\paragraph{Case I: $x^T y=0=z^T w.$}
Then $i(M)=2$ and
\begin{equation}
 \label{CaseI_Dinv}
M^{D}=(ab)^{-1}\!
\mxl{cc|cc}
0 & x^T & a & 0\\
y & 0 & 0 & b^{-1}yz^T\\ \hline
b & 0 & 0 & z^T\\
0 & a^{-1}wx^T & w & 0
\mxr.
\end{equation}

\paragraph{Case II: $x^T y \neq 0=z^T w$ \textnormal{and} $\zeta=x^T y + ab \neq  0.$}
Then $i(M)=3$ and
 \begin{equation}
 \label{CaseII_Dinv}
M^D = \zeta^{-1} \mxl{cc|cc}
0 & x^T  & a & 0\\
y & 0 & 0 & \zeta^{-1}ayz^T \\ \hline
b & 0 & 0 & \zeta^{-1}abz^T \\
0 & \zeta^{-1}bwx^T  & \zeta^{-1}abw & 0
\mxr.
\end{equation}

\paragraph{Case III: $x^T y \neq 0=z^T w$ \textnormal{and} $\zeta=x^T y + ab =0.$}
Then $i(M)=5$ and
\begin{equation}
\label{CaseIII_Dinv}
M^D = 0.
\end{equation}

It is worth noting that the situation in which $x^{T}y = 0$ and $z^{T}w \neq 0$ does not introduce a genuinely new configuration: by a suitable permutation similarity, this case reduces to the previously analysed situation where $x^{T}y \neq 0$ and $z^{T}w = 0$. Consequently, the three cases stated above provide a complete and mutually exclusive description of the Drazin invertibility of matrices associated with double star digraphs that do not admit a group inverse. 

Throughout the remainder of the paper, any reference to Case~I, Case~II, or Case~III is to be interpreted with respect to the above tripartite classification. 

A matrix $A$ is said to be Moore--Penrose invertible (with respect to $*$) if the system of equations
\begin{equation*}
\label{Eq:MoorePenrose}
\begin{aligned}
    &(1) \; A X A = A, \quad
    (2) \; XAX = X, \quad
    (3) \; (AX)^{*} = AX, \quad
    (4) \; (XA)^{*} = XA.
\end{aligned}
\end{equation*}
admits a common solution. When it exists, this solution is unique and is denoted by $A^{\dagger}$.

A matrix satisfying equation (1) is called a von Neumann inverse of $A$ and is denoted by $A^{-}$; the set of all von Neumann inverses of $A$ is written $A\{1\}$. A common solution of (1) and (2) is called a reflexive inverse of $A$, written $A^{+}$. For any two von Neumann inverses $A^{-},A^{=}\in A\{1\}$ of $A$, the product $A^{-} A A^{=}$ is a reflexive inverse of $A$.

We denote by $A\{1,3\}$ (respectively $A\{1,4\}$) the set of matrices satisfying equations (1) and (3) (respectively (1) and (4)); such matrices are called $(1,3)$--inverses (respectively $(1,4)$--inverses) of $A$. We write $A^{(1,3)}$ and $A^{(1,4)}$ for arbitrary elements of these sets whenever they are nonempty.

We now examine existence criteria for $(1,3)$-- and $(1,4)$--inverses. Classical results for $\ast$-regular rings appear in \cite{HartwigBlock}\label{13inv}, and the matrix case admits a transparent formulation in terms of full rank factorizations and column--space conditions, as stated in the following proposition \cite{Puystjens}.

We adopt the notation $\mathcal{R}(A)$ to denote the range of a matrix $A$.

\begin{prop}\label{eq:13-14-canonical}
 Given a full rank factorization of a matrix $A=FG$,
\ben
\item the following are equivalent:
\ben
\item $A\{1, 3\}\ne \varnothing$
\item $F^*F$ is invertible
\item $\mathcal{R}(A^*)=\mathcal{R}(A^*A)$
\item $\operatorname{rank}(A^*A)=\operatorname{rank}(A)$
\een
In this case, $(A^*A)^-A^*\in A\{1, 3\}$.

\item the following are equivalent:
\ben
\item $A\{1, 4\}\ne \varnothing$
\item $GG^*$ is invertible
\item $\mathcal{R}(A)=\mathcal{R}(AA^*)$
\item $\operatorname{rank}(AA^*)=\operatorname{rank}(A)$
\een
In this case, $A^*(AA^*)^-\in A\{1, 4\}$.

\een
\end{prop}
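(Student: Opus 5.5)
Fix a full rank factorization $A=FG$ with $A$ of size $m\times n$ and rank $r$, so $F$ is $m\times r$ of full column rank, $G$ is $r\times n$ of full row rank, and $F$ has a left inverse $F^{-}$ and $G$ a right inverse $G^{-}$. I will prove part (1) through the cycle (a)$\Rightarrow$(b)$\Rightarrow$(d)$\Rightarrow$(c)$\Rightarrow$(a). Part (2) then follows by applying (1) to $A^*=G^*F^*$: this is a full rank factorization of $A^*$, and $X\in A\{1,4\}$ exactly when $X^*\in A^*\{1,3\}$. The care needed throughout is that $\FF$ is a general field with involution, so $F^*F$ need not be invertible. For instance, over $\C$ with the transpose as involution, $F=(1,i)^T$ gives $F^*F=0$. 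So no positivity argument may be used; everything must be rank and range algebra.

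\emph{(a)$\Rightarrow$(b).} Let $X\in A\{1,3\}$. Then $A=AXA=(AX)^*A=X^*A^*A$. Substituting $A=FG$ gives $FG=X^*G^*F^*FG$. Multiply on the left by $F^{-}$ and on the right by $G^{-}$ to get $I_r=(F^{-}X^*G^*)(F^*F)$. The $r\times r$ matrix $F^*F$ therefore has a left inverse, so it is invertible.

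\emph{(b)$\Rightarrow$(d).} Write $A^*A=G^*(F^*F)G$. If $F^*F$ is invertible, then $G^*(F^*F)$ has full column rank $r$ and $G$ has full row rank $r$. Hence this product is itself a full rank factorization, and $\operatorname{rank}(A^*A)=r=\operatorname{rank}(A)$.

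\emph{(d)$\Leftrightarrow$(c).} Since $A^*A=A^*\cdot A$, we always have $\mathcal{R}(A^*A)\subseteq\mathcal{R}(A^*)$. Also $\operatorname{rank}(A^*)=\operatorname{rank}(A)$ over any field, because $\overline{\cdot}$ is a field automorphism and so preserves rank. Equality of the two ranges is therefore equivalent to equality of the two ranks.

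\emph{(c)$\Rightarrow$(a), with the formula.} From $\mathcal{R}(A^*)=\mathcal{R}(A^*A)$ we get $A^*=A^*AW$ for some $W$. Taking adjoints gives $A=W^*A^*A$. Put $X=(A^*A)^-A^*$ for any von Neumann inverse $(A^*A)^-$. Then
\[
AXA=W^*A^*A(A^*A)^-A^*A=W^*A^*A=A .
\]
Next, $AX=W^*(A^*A)(A^*A)^-A^*$. Substituting $A^*=A^*AW$ gives $AX=W^*A^*AW$, which is Hermitian. Hence $X\in A\{1,3\}$, which proves the stated membership.

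The step I expect to need the most care is (a)$\Rightarrow$(b). Over $\C$ with conjugation one would argue that $F^*F$ is positive definite, but that is unavailable here. The left/right cancellation argument above avoids it and uses only $(AX)^*=AX$. Everything else is routine.
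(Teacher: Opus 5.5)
Your proof is correct, and it takes a partly different route from the paper's.

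\textbf{Necessity.} Here the two arguments agree. Your (a)$\Rightarrow$(b) is the paper's (c)$\Rightarrow$(b) cancellation argument, applied directly to $A=X^*A^*A$. The paper instead first passes through (a)$\Rightarrow$(c) via $A^*=A^*AX$.

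\textbf{Sufficiency.} This is where the routes differ. The paper proves (b)$\Rightarrow$(a):
\begin{itemize}
\item From invertibility of $F^*F$ it derives the two $*$-cancellation laws $BA^*A=CA^*A\Rightarrow BA^*=CA^*$ and $A^*AB=A^*AC\Rightarrow AB=AC$.
\item It uses these to get $A(A^*A)^-A^*A=A$.
\item It shows that $A(A^*A)^-A^*$ does not depend on the choice of von Neumann inverse, and obtains symmetry by comparing with the choice $((A^*A)^-)^*$.
\end{itemize}
You prove (c)$\Rightarrow$(a) with the single substitution $A^*=A^*AW$. This yields $AXA=A$ and $AX=W^*A^*AW$ at once, and independence of the choice comes for free.

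\textbf{What each buys.} The paper's version isolates the $*$-cancellation property, which is the natural formulation in $*$-regular rings and matches the ring-theoretic sources it cites. Yours is shorter, and it also supplies what the paper leaves implicit:
\begin{itemize}
\item a genuine proof of (c)$\Leftrightarrow$(d), using $\mathcal{R}(A^*A)\subseteq\mathcal{R}(A^*)$ and $\operatorname{rank}(A^*)=\operatorname{rank}(A)$, the latter needing the entrywise involution to be a field automorphism;
\item the extra link (b)$\Rightarrow$(d) through the full rank factorization $G^*(F^*F)G$;
\item part (2) by duality through $A^*=G^*F^*$, rather than by an analogous argument.
\end{itemize}

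\textbf{One line to add.} In the duality step, note that the formula transfers to every choice $A^*(AA^*)^-$ because $Z\mapsto Z^*$ maps $(AA^*)\{1\}$ onto itself.
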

\proof We prove part~(1); the proof of part~(2) follows by a completely analogous argument.

$(1b)\imp (1a)$. We may take, as a reflexive inverse of $F$, any left inverse, that is, a matrix $F^{+}$ satisfying $F^{+}F = I$. Likewise, we may choose as a reflexive inverse of $G$ any right inverse $G^{+}$ such that $GG^{+} = I$.

Let $B$ and $C$ be matrices of compatible sizes. If $BA^{*}A = CA^{*}A$, then
\[
BG^{*}F^{*}FG = CG^{*}F^{*}FG,
\]
and hence
\[
BG^{*}F^{*}F = CG^{*}F^{*}F.
\]
Since $F^{*}F$ is invertible, we obtain $BG^{*} = CG^{*}$, and therefore $BA^{*} = CA^{*}$.

Analogously, if $A^*AB=A^*AC$ then $AB=AC$.

Applying this one-sided \(*\)-cancellation yields
\[
A(A^{*}A)^{-}A^{*}A = A,
\]
as it follows from
\[
A^{*}A (A^{*}A)^{-} A^{*}A = A^{*}A.
\]

It remains to show that $A(A^{*}A)^{-}A^{*}$ is symmetric. To this end, observe that $\left((A^*A)^-\right)^*\in A^*A\{1\}$.

The product $A(A^*A)^-A^*$ is invariant under the choice of von Neumann inverse of $A^*A$; that is,
\[
A(A^*A)^-A^*=A(A^*A)^=A^*
\]
for any $(A^*A)^-, (A^*A)^=\in A^*A\{1\}$. This follows directly from $A^*A(A^*A)^-A^*A=A^*A(A^*A)^=A^*A$ applying the $*$-cancellation. In particular, $A(A^*A)^-A^*=A\left((A^*A)^-\right)^*A^*$, and the symmetry of $A(A^*A)^-A^*$ follows.

$(1a)\imp (1c)$. Let $X\in A\{1, 3\}$. Then $A^*=A^*AX$, and hence $\mathcal{R}(A^*)\subseteq \mathcal{R}(A^*A)$.

$(1c)\imp (1b)$. If $\mathcal{R}(A^*)= \mathcal{R}(A^*A)$, then there exists a matrix $X$ such that $A=XA^*A$. Given a full rank factorization $A=FG$, set $F^+, G^+$ such that $F^+F=I, GG^+=I$. The identity $A = X A^{*}A$ becomes
\[
FG = XG^{*}F^{*}FG.
\]
Multiplying on the left by \(F^{+}\) and on the right by \(G^{+}\) yields
\[
I = (F^{+} X G^{*})(F^{*}F),
\]
so \(F^{*}F\) is invertible.

The equivalence $(1c) \Leftrightarrow (1d)$ is immediate from the rank identities.
\endproof

As an example, consider the complex matrix $A=\bmx 1\\ i\emx$, and  take the involution to be the transposition of matrices (that is, $*=T$). Since $\operatorname{rank}(A^*A)=0\ne 1=\operatorname{rank}(A)$, the matrix $A$ has no $(1, 3)$--inverse (and therefore it is not Moore--Penrose invertible).

We shall frequently require the Moore--Penrose invertibility of matrices associated with
double star digraphs. The following result, taken from Theorem 3.5 in \cite{araujo2026},
provides the precise existence criteria and the explicit formula in terms of the parameters
$s,u,t,v$ introduced in \eqref{eq:all_scalar}.
\begin{lem} \label{thmMPdoublestar} Let $M$ be a matrix  with associated double star digraph $S_{(m+1),(n+1)}$ of the form \eqref{eq:double-star-canonical}, with $x=[x_i], y = [y_i], z=[z_i]$ and $w=[w_i]$. Then
$M^\dagger$ exists if and only if $s,u,t,v\in \mathbb{F}\setminus \{0\}$, where $s = \sum x_i\ovl{x_i} , u=\sum y_i\ovl{y_i} , t=\sum z_i\ovl{z_i} , v= \sum w_i\ovl{w_i}$. In this case,
\begin{equation*}
\label{Moore_Penrose}
    M^\dagger = \mxl{cc|cc} 0 & u^{-1}y^* & 0 & 0 \\ s^{-1}\bar{x} & 0 & 0 & -s^{-1}av^{-1}\bar{x}w^* \\ \hline 0 & 0 & 0 &v^{-1}w^* \\ 0 & -t^{-1}bu^{-1}\bar{z}y^* & t^{-1}\bar{z} & 0 \mxr.
\end{equation*}
\end{lem}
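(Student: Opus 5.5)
We prove the existence criterion by combining Proposition~\ref{eq:13-14-canonical} with a full rank factorization of $M$. After that, we verify the candidate formula directly against the four Penrose equations.

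\textbf{Rank and Gram matrices.} Since $x,y,z,w$ have all entries nonzero and $a,b\neq 0$, the nonzero columns of $M$ are spanned by
\[
e_1\ \text{(via $y$ block)},\quad \bmx 1\\0\\0\\0\emx\text{-type vectors},\ \dots
\]
More concretely, the columns of $M$ lie in the span of the four vectors
\[
f_1=\bmx 0\\ y\\ b\\ 0\emx,\qquad
f_2=\bmx 1\\0\\0\\0\emx,\qquad
f_3=\bmx 0\\0\\0\\ w\emx,\qquad
f_4=\bmx 0\\0\\1\\0\emx .
\]
Indeed, column $1$ equals $f_1$, the columns of the second block are multiples of $f_2$, column $m+2$ equals $af_2+f_3$, and the last block gives multiples of $f_4$. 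These four vectors are linearly independent, and each occurs. Hence $\operatorname{rank}M=4$, and we obtain a full rank factorization $M=FG$ with $F=[f_1\ f_2\ f_3\ f_4]$ and $G$ a $4\times(m+n+2)$ matrix read off from the coefficients.

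Next we compute the two Gram matrices. $F^*F$ has diagonal $(u+b\ovl b,\,1,\,v,\,1)$; its only nonzero off-diagonal entries are $\ovl b$ and $b$ in positions $(1,4)$ and $(4,1)$. Its determinant is therefore $v\,(u+b\ovl b-b\ovl b)=uv$, so $F^*F$ is invertible if and only if $u,v\neq 0$. By the symmetric row argument, the rows of $M$ are spanned by $(0,x^T,a,0)$, $e_1^T$, $(0,0,0,z^T)$ and $e_{m+2}^T$. For the corresponding $G$ (up to an invertible change of factor), $GG^*$ has determinant $st$. Proposition~\ref{eq:13-14-canonical} then shows that $M\{1,3\}\neq\varnothing$ if and only if $uv\neq0$, and $M\{1,4\}\neq\varnothing$ if and only if $st\neq 0$.

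\textbf{Existence criterion.} We use the standard fact that $M^\dagger$ exists if and only if both $M\{1,3\}$ and $M\{1,4\}$ are nonempty. In that case $M^\dagger=M^{(1,4)}MM^{(1,3)}$, and this product can be checked against the Penrose equations purely algebraically. Combining this with the previous step yields the stated condition $s,u,t,v\in\FF\setminus\{0\}$.

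\textbf{The formula.} Rather than computing $G^*(GG^*)^{-1}(F^*F)^{-1}F^*$, we verify the displayed $X$ directly, using block multiplication and the identities $y^*y=u$, $x^T\ovl x=\ovl{s}=s$ (note $s$ is fixed by the involution), $w^*w=v$ and $z^T\ovl z=t$. The computation
\[
MX=\mathrm{diag}\!\left(1,\;u^{-1}yy^*,\;1,\;v^{-1}ww^*\right)
\]
comes out as follows. The $(1,4)$ block is $s^{-1}a v^{-1}\cdot(-x^T\ovl x)w^*+a v^{-1}w^*=0$, and the $(3,2)$ block cancels in the same way, using $b$ and $t$. The resulting matrix is visibly Hermitian.

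Similarly,
\[
XM=\mathrm{diag}\!\left(1,\;s^{-1}\ovl x x^T,\;1,\;t^{-1}\ovl z z^T\right),
\]
which is also Hermitian. Equations (1) and (2) then follow quickly from $MXM=M$, since $MX$ acts as the identity on the columns $f_i$. The main obstacle is bookkeeping the cross terms involving $a$ and $b$ in these products. These are precisely where the correction blocks $-s^{-1}av^{-1}\ovl x w^*$ and $-t^{-1}bu^{-1}\ovl z y^*$ are needed, so those entries must be checked carefully.
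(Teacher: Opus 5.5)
The paper gives no proof of this lemma; it is quoted from Theorem 3.5 of \cite{araujo2026}. Your argument is therefore an independent, self-contained proof, and it is correct.

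\textbf{Existence criterion.} Your full rank factorization $M=FG$ with $F=[f_1\ f_2\ f_3\ f_4]$ is right. The rows of $G$ are $e_1^T$, $(0,x^T,a,0)$, $e_{m+2}^T$, $(0,0,0,z^T)$, and the values $\det(F^*F)=uv$ and $\det(GG^*)=st$ check out. Combined with Proposition~\ref{eq:13-14-canonical} and the identity $M^\dagger=M^{(1,4)}MM^{(1,3)}$, which holds over any involutive field, this gives the criterion.

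Compared with the citation, your route uses only tools already in the paper. It also yields a sharper statement: $(1,3)$-invertibility of $M$ is governed by $uv\neq0$ alone, and $(1,4)$-invertibility by $st\neq0$ alone. This mirrors how the paper itself treats $M^2$ and $M^3$ in Section~\ref{sec:coreEP}.

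\textbf{The formula.} Verifying the displayed matrix against the Penrose equations, rather than expanding $G^*(GG^*)^{-1}(F^*F)^{-1}F^*$, is the efficient choice. Note that this verification alone already proves sufficiency, so the Proposition is only needed for necessity. Your products $MX$ and $XM$ are correct, and they reproduce \eqref{projectors}.

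\textbf{Corrections.} Equation (2) does not follow from $MXM=M$; it needs its own short check. For example, $XM=\mathrm{diag}(1,s^{-1}\ovl{x}x^T,1,t^{-1}\ovl{z}z^T)$ fixes every column of $X$. This is because in each column of $X$ the second block is a multiple of $\ovl{x}$ and the fourth block is a multiple of $\ovl{z}$, while $x^T\ovl{x}=s$ and $z^T\ovl{z}=t$. Hence $XMX=X$. Also delete the garbled false start before ``More concretely'' in the rank paragraph.
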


The notion of \emph{core inverse} was introduced by Baksalary and Trenkler in \cite{baksalary2010core} as a refinement of the group inverse.
For a square matrix $A$, the core inverse of $A$ is the unique matrix  $A^{\core}$ satisfying
\begin{equation*}
AA^{\core} = P_{A}
\qquad \text{and} \qquad
\mathcal{R}(A^{\core}) \subseteq  \mathcal{R}(A),
\end{equation*}
where $P_{A}$ denotes the orthogonal projector onto $\mathcal{R}(A)$.
A matrix $A$ with Moore-Penrose inverse is core invertible if and only if $i(A) \le 1$; in this case, the core inverse exists and is given by
\begin{equation}\label{eq:formula_core_inverse}
A^{\core} = A^{\sharp} A A^{\dagger}.
\end{equation} 

The dual concept, the \emph{dual core inverse}, was introduced analogously in \cite{baksalary2010core}. It is defined as the unique matrix $A_{\core}$ satisfying
\begin{equation*}
A_{\core} A = P_{A^{*}}
\qquad \text{and} \qquad
\mathcal{R}(A_{\core}) \subseteq \mathcal{R}(A^{*}),
\end{equation*}
where $P_{A^{*}}$ projects onto $\mathcal{R}(A^{*})$.
Whenever $A$ is Moore-Penrose invertible and $i(A) \le 1$, the dual core inverse also exists and admits the representation
\begin{equation}\label{eq:formula_dual_core_inverse}
A_{\core} = A^{\dagger} A A^{\sharp}.
\end{equation} 

When studying matrices associated with double star digraphs, the explicit criteria for group invertibility and Moore--Penrose invertibility already established allow
the formulas~\eqref{eq:formula_core_inverse} and~\eqref{eq:formula_dual_core_inverse} to be applied directly.

We now introduce the notion of \emph{core EP invertibility}, also known as pseudo-core invertibility, as presented in \cite{gao2018pseudo}. A matrix $A$ is
said to be core EP invertible if there exists a matrix solution $X$ to
\begin{align*}
    \text{(i)} & \; XA^{m+1} = A^m, \; \text{for some nonnegative integer } m, \\
    \text{(ii)} & \; AX^2 = X, \\
    \text{(iii)}& \; (AX)^* = AX.
\end{align*}
In case a solution exists, it is unique and is called the core EP inverse of $A$, denoted by $A^{\coreEP}$.

Dually, $A$ is said to be dual core EP invertible if there is a matrix $X$ such that
\begin{align*}
    \text{(i')} & \; A^{m+1}X = A^m, \; \text{for some nonnegative integer } m, \\
    \text{(ii')} & \; X^2A = X, \\
     \text{(iii')} & \; (XA)^* = XA.
\end{align*}
in which case it is unique and is called the dual core EP inverse of $A$, denoted by $A_{_{\coreEP}}$.

When $m=1$, the (dual) core EP inverse coincides with the (dual) core inverse. Characterizations of this type of inverse and generalizations can be found in \cite{JiWei-coreEP2021,ke2019core, Li, wang2019right,WuEtAl2024}. Generalizations and extensions of core and core EP invertibility can be found for instance in \cite{Mosic2025, MosaicMarovt-weighted-coreEP,Mosic2021, Zhu2023}.

We shall use the following lemma that in particular relates core EPness (respectively dual core EPness) to Drazin and $(1,3)$--inverses (respectively $(1,4)$--inverses).
\begin{lem}[\cite{gao2018pseudo},Theorem 2.3] Given a square matrix  $A$ with Drazin index $i(A)$,
\label{CoreEP1,2}
\begin{enumerate}
    \item $A^{\coreEP}$ exists if and only if $A^D$ exists and $(A^m)\{1,3\} \neq \varnothing$, where $m\geq i(A)$. In this case, $A^{\coreEP} = A^DA^m(A^m)^{(1,3)}$ for any $(A^m)^{(1,3)} \in (A^m)\{1,3\}.$
    \item $A_{_{\coreEP}}$ exists if and only if $A^D$ exists and $(A^m)\{1,4\} \neq \varnothing$, where $m\geq i(A)$. In this case, $A_{_{\coreEP}} = (A^m)^{(1,4)}A^m A^D$ for any $(A^m)^{(1,4)} \in (A^m)\{1,4\}$.
\end{enumerate}
\end{lem}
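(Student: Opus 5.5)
We prove the two parts by direct verification from the defining equations; part~(2) is the mirror image of part~(1), obtained by applying $*$ and reversing the order of products. Over a field the Drazin inverse of a square matrix always exists, so in part~(1) the real content is this: core EP invertibility is equivalent to $(A^m)\{1,3\}\neq\varnothing$, with the stated formula. Write $k=i(A)$. We use throughout that $A^{m+1}A^D=A^DA^{m+1}=A^m$ for all $m\ge k$, and that $A^D$ commutes with every power of $A$.

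\emph{Sufficiency.} Let $m\ge k$, let $Y\in (A^m)\{1,3\}$ and put $X=A^DA^mY$.
\begin{itemize}
\item[(iii)] We have $AX=AA^DA^mY=A^mY$, which is Hermitian by equation (3) for $Y$.
\item[(ii)] Since $A^DA^m=A^mA^D$ and $A^mYA^m=A^m$,
\[
AX^2=(A^mY)(A^DA^mY)=A^mYA^mA^DY=A^mA^DY=X .
\]
\item[(i)] Using $YA^m$ sandwiched by $A^m$,
\[
XA^{m+1}=A^DA^mYA^mA=A^DA^{m+1}=A^m .
\]
\end{itemize}
Hence $X$ is the core EP inverse, and the formula holds for every choice of $(A^m)^{(1,3)}$ because the core EP inverse is unique.

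\emph{Necessity.} Let $X$ satisfy (i)--(iii) for some $m$. Equation (i) survives right multiplication by $A$, so it holds for every larger exponent, and we may assume $m\ge k$.
\begin{itemize}
\item Iterating (ii) gives $X=A^jX^{j+1}$ for all $j\ge 0$.
\item Combining this with (i),
\[
XAX=XA\cdot A^mX^{m+1}=XA^{m+1}X^{m+1}=A^mX^{m+1}=X .
\]
\item Next,
\[
AXA^m=AX\cdot XA^{m+1}=AX^2A^{m+1}=XA^{m+1}=A^m .
\]
\item Finally, $A^mX^m=A\cdot A^{m-1}X^m=AX$.
\end{itemize}
Therefore $Y:=X^m$ satisfies $A^mYA^m=AXA^m=A^m$, and $A^mY=AX$ is Hermitian by (iii). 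So $X^m\in(A^m)\{1,3\}$, and the set is nonempty. By the sufficiency computation and uniqueness, $X=A^DA^m(A^m)^{(1,3)}$.

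\emph{Remaining points.} We must make sure the statement is insensitive to the choice of $m\ge i(A)$. If $(A^m)\{1,3\}\ne\varnothing$ for one such $m$, the sufficiency argument produces $X$. By the necessity argument, $X^{m'}\in(A^{m'})\{1,3\}$ for every $m'\ge i(A)$, since (i) holds for all larger exponents. Part~(2) follows by the dual computation: take $X=(A^m)^{(1,4)}A^mA^D$, and in the necessity argument use $Y=X^m$ with $X=X^{j+1}A^j$.

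The only step needing real care is the necessity direction, namely finding the correct $\{1,3\}$-inverse. The key identities are $XAX=X$ and $AXA^m=A^m$. Both follow by substituting the iterated form $X=A^mX^{m+1}$ of (ii) into (i).
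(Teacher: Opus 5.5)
\textbf{Assessment.} The paper does not prove this lemma; it quotes it from Gao--Chen, so there is no in-text argument to compare with. Your direct verification from the defining equations is the standard route, and the computations are correct: the checks of (i)--(iii) for $X=A^DA^mY$, the identities $XAX=X$, $AXA^m=A^m$, $A^mX^m=AX$ in the necessity part, and their duals.

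\textbf{The gap.} It lies in your claim that the criterion is insensitive to the choice of $m\ge i(A)$. Your justification, ``since (i) holds for all larger exponents'', only pushes (i) upward from the exponent at which it is known. So it gives $X^{m'}\in(A^{m'})\{1,3\}$ only for $m'$ at least that exponent, and says nothing for $i(A)\le m'<m$.

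The same limitation affects your necessity direction. If a core EP inverse is given with an exponent $m_0>i(A)$ in the definition, you only obtain $(A^{m})\{1,3\}\neq\varnothing$ for $m\ge m_0$; you never reach $m=i(A)$. That is precisely the instance the paper needs: the ``only if'' halves of Theorems \ref{thm:core-EP} and \ref{thm:core-EP_II} test $M^2$ and $M^3$, i.e.\ $m=i(M)$.

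\textbf{The fix.} You need one more step showing that (i) already holds at $k=i(A)$. Work with $m\ge k$ as in your necessity argument.
\begin{itemize}
\item From $X=A^mX^{m+1}$ and $A^DA^{m+1}=A^m$ you get $X=A^DAX$.
\item From $A^k=A^m(A^D)^{m-k}$ and $AXA^m=A^m$ you get $AXA^k=A^k$.
\end{itemize}
Hence
\[
XA^{k+1}=A^D\,(AXA^k)\,A=A^DA^{k+1}=A^k ,
\]
and your necessity computation then runs verbatim at every $m'\ge k$.

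Alternatively, note that $(A^m)\{1,3\}\neq\varnothing$ says exactly that some Hermitian idempotent has range $\mathcal{R}(A^m)$, and $\mathcal{R}(A^m)=\mathcal{R}(A^k)$ for all $m\ge k$.

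The dual part needs the mirror step $X=XAA^D$, which follows from $X=X^{m+1}A^m$. Together with $A^kXA=A^k$ it gives $A^{k+1}X=A(A^kXA)A^D=A^k$.

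\textbf{A side benefit.} The identities $X=A^DAX$ and $AX=A^mX^m$ give $X=A^DA^mX^m$ directly. Since $A^mY$ is the same for every $Y\in(A^m)\{1,3\}$, this yields the formula without invoking uniqueness of the core EP inverse.
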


We note that core EP invertibility is invariant under matrix unitary similarity. That is to say, if $B=UAU^*$ with $U^*=U^{-1}$ then $A$ is core EP invertible if and only if $B$ is core EP invertible. A similar result is obtained for dual core EP.

The notion of a composite generalized inverse obtained by successively
applying two generalized inverses was recently investigated in
\cite{Chen2020}, where the authors introduce the Moore--Penrose--core EP (MPCEP) inverse and the $^\ast\!$core EP--Moore--Penrose ($^\ast\!$CEPMP) inverse. 

Let $A$ be a square matrix of order $n$ and assume that the core EP inverse
$A^{\coreEP}$ and the Moore--Penrose inverse $A^\dagger$ exist. The \emph{Moore--Penrose--core EP inverse} of $A$ is the unique
matrix $X$ satisfying the system
\begin{equation*}
X A X = X, \qquad
A X = A A^{\coreEP}, \qquad
X A = A^{\dagger} A A^{\coreEP} A.
\end{equation*}
This system is consistent whenever $A^\dagger$ and $A^{\coreEP}$ exist,
and its unique solution is given explicitly by
\begin{equation}\label{eq:def-mpcep}
A^{\dagger,\coreEP} \;=\; A^{\dagger}\, A\, A^{\coreEP},
\end{equation}
which is called the \emph{MPCEP inverse} of $A$.

Assume now that the dual form of the core EP inverse, $A_{\coreEP}$, exists, as well as the Moore--Penrose inverse $A^\dagger$. The \emph{$^\ast\!$core EP--Moore--Penrose inverse} of $A$ is the unique
matrix $X$ satisfying
\begin{equation*}
X A X = X, \qquad
A X  = A A_{\coreEP} A A^{\dagger}, \qquad
X A = A_{\coreEP} A.
\end{equation*}
Whenever $A^\dagger$ and $A_{\coreEP}$ exist, the system is consistent
and its unique solution is
\begin{equation}\label{eq:def-cepmp}
A_{\coreEP,\dagger} \;=\; A_{\coreEP}\, A\, A^{\dagger},
\end{equation}
which is called the \emph{$^\ast\!$CEPMP inverse} of $A$.

By analogy with the MPCEP and $^\ast\!$CEPMP inverses, other composite generalized 
inverses can be formed by combining the Moore--Penrose inverse with the core EP 
inverse or with the dual core EP inverse.  The \emph{generalized dual core} (GDC) inverse 
and the \emph{generalized core} (GC) inverse were introduced in 
\cite{Mosic2024}. Over a general involutive field, the existence of each of these composite inverses requires the simultaneous existence of the corresponding core-type inverse 
and of the Moore--Penrose inverse. 

Let $A$ be a square matrix and assume that the core EP inverse 
$A^{\coreEP}$ and the Moore--Penrose inverse $A^\dagger$ exist.  
The \emph{generalized dual core inverse} of $A$, denoted by $A^{\mathrm{GDC}}$, is the unique matrix 
$X$ satisfying
\begin{equation*}
XAX = X, \qquad
AX = A^{\coreEP} A, \qquad
XA = A^{\dagger} A^{\coreEP} A^{2}.
\end{equation*}
Under these hypotheses this system is consistent, and its 
unique solution is given explicitly by
\begin{equation}\label{eq:gdc-explicit}
A^{\mathrm{GDC}} \;=\; A^{\dagger}\, A^{\coreEP}\, A.
\end{equation}

Assume now that $A$ is such that the dual core EP inverse 
$A_{\coreEP}$ and the Moore--Penrose inverse $A^\dagger$ exist.  
The \emph{generalized core inverse} of $A$, denoted by $A^{\mathrm{GC}}$, is the unique matrix 
$X$ satisfying
\begin{equation*}
XAX = X, \qquad
AX = A^{2} A_{\coreEP} A^{\dagger}, \qquad
XA = A A_{\coreEP}.
\end{equation*}
Whenever these conditions hold, this system is consistent, 
and its unique solution is
\begin{equation}\label{eq:gc-explicit}
A^{\mathrm{GC}} \;=\; A\, A_{\coreEP}\, A^{\dagger}.
\end{equation}


\section{Core and dual core invertibility}\label{sec:core}

A matrix admits a core inverse and a dual core inverse if and only if it admits
both a Moore--Penrose inverse and a group inverse. For matrices associated with double star digraphs, of the form \eqref{eq:double-star-canonical}, we have seen that this is equivalent to requiring that all Gram scalars $s,u,t,v$ and both coupling terms $x^Ty$ and $z^Tw$ are
nonzero. Under these assumptions, the explicit formulas for $M^\dagger$ and $M^\sharp$ 
given in Lemma \ref{thmMPdoublestar} and in \eqref{eq:Msharp} may be applied directly in the computation of $M^{\core}$ and $M_{\core}$. The orthogonal projectors $M^\dagger M$ and $M M^\dagger$ take the explicit block forms
\begin{equation}\label{projectors}
M^\dagger M
=\mxl{cccc}
1 & 0 & 0 & 0 \\[2pt]
0 & s^{-1}\,\bar{x} x^T & 0 & 0 \\[2pt]
0 & 0 & 1 & 0 \\[2pt]
0 & 0 & 0 & t^{-1}\,\bar{z} z^T
\mxr,
\qquad
M M^\dagger
=\mxl{cccc}
1 & 0 & 0 & 0 \\[2pt]
0 & u^{-1}\,y y^* & 0 & 0 \\[2pt]
0 & 0 & 1 & 0 \\[2pt]
0 & 0 & 0 & v^{-1}\,w w^*
\mxr.
\end{equation}
These identities are used in the computation of the core and dual core 
inverses of $M$.

\begin{prop} Let $M$ be a matrix with associated double star digraph $S_{(m+1),(n+1)}$ of the form \eqref{eq:double-star-canonical}, with $x=[x_i], y = [y_i], z=[z_i]$ and $w=[w_i]$. Assume $x^Ty\neq 0$, $z^Tw\neq 0$ and $s,u,t,v\in \mathbb{F}\setminus \{0\}$, where $s = \sum x_i\ovl{x_i} , u=\sum y_i\ovl{y_i} , t=\sum z_i\ovl{z_i} , v= \sum w_i\ovl{w_i}$. Then,
\begin{enumerate}
\item $M$ is core invertible and 
\[
M^{\core} = \mxl{cc|cc} 0 & u^{-1}y^\ast & 0 & 0 \\[2pt]
(x^T y)^{-1}y & 0 & 0 & -av^{-1}(x^T y)^{-1}yw^\ast \\[2pt] \hline
0 & 0 & 0 & v^{-1}w^\ast \\[2pt]
0 & -bu^{-1}(z^T w)^{-1} wy^\ast & (z^T w)^{-1} w & 0 \mxr;\]
\item $M$ is dual core invertible and 
\[M_{\core} = \mxl{cc|cc} 0 & (x^T y)^{-1}x^T & 0 & 0 \\[2pt]
s^{-1}\ovl{x} & 0 & 0 & -as^{-1}(z^T w )^{-1}\ovl{x} z^T \\[2pt] \hline 
0 & 0 & 0 & (z^T w)^{-1} z^T \\[2pt]
0 & -bt^{-1}(x^T y)^{-1} \ovl{z}x^T & t^{-1}\ovl{z} & 0
\mxr.
\]
\end{enumerate}
\end{prop}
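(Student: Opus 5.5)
Under the stated hypotheses, the criterion of \cite{McDonald} gives the existence of $M^\sharp$, namely $x^Ty\neq 0$ and $z^Tw\neq 0$, with the explicit form \eqref{eq:Msharp}. Lemma~\ref{thmMPdoublestar} gives the existence of $M^\dagger$, since $s,u,t,v\neq 0$. Hence $i(M)\le 1$ and $M$ is Moore--Penrose invertible, so $M$ is both core and dual core invertible. We may then use the representations $M^{\core}=M^\sharp M M^\dagger$ from \eqref{eq:formula_core_inverse} and $M_{\core}=M^\dagger M M^\sharp$ from \eqref{eq:formula_dual_core_inverse}. What remains is a block computation.

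For part (1), we first use $MM^\sharp=M^\sharp M$. This gives
\[
M^{\core}=M^\sharp (MM^\dagger),
\]
and $MM^\dagger$ is the block-diagonal projector $\mathrm{diag}(1,\,u^{-1}yy^\ast,\,1,\,v^{-1}ww^\ast)$ from \eqref{projectors}. Right multiplication by a block-diagonal matrix scales each block column of $M^\sharp$.
\begin{itemize}
\item The first and third block columns of $M^\sharp$ are unchanged.
\item In the second block column, the entry $(x^Ty)^{-1}x^T$ becomes $(x^Ty)^{-1}u^{-1}x^Tyy^\ast=u^{-1}y^\ast$. The entry $-b[(x^Ty)(z^Tw)]^{-1}wx^T$ becomes $-bu^{-1}(z^Tw)^{-1}wy^\ast$, using $x^Ty y^\ast$ with $x^Ty$ a scalar.
\item In the fourth block column, $-a[(x^Ty)(z^Tw)]^{-1}yz^T$ becomes $-av^{-1}(x^Ty)^{-1}yw^\ast$, and $(z^Tw)^{-1}z^T$ becomes $v^{-1}w^\ast$.
\end{itemize}
This reproduces the stated matrix.

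For part (2), we write
\[
M_{\core}=(M^\dagger M)M^\sharp,
\]
again using commutativity of $M$ and $M^\sharp$ through $M^\dagger M M^\sharp$. The factor $M^\dagger M=\mathrm{diag}(1,\,s^{-1}\bar x x^T,\,1,\,t^{-1}\bar z z^T)$ acts on the block rows of $M^\sharp$.
\begin{itemize}
\item The first and third block rows of $M^\sharp$ are unchanged.
\item In the second block row, $(x^Ty)^{-1}y$ becomes $s^{-1}\bar x$, and $-a[(x^Ty)(z^Tw)]^{-1}yz^T$ becomes $-as^{-1}(z^Tw)^{-1}\bar x z^T$.
\item In the fourth block row, $-b[(x^Ty)(z^Tw)]^{-1}wx^T$ becomes $-bt^{-1}(x^Ty)^{-1}\bar z x^T$, and $(z^Tw)^{-1}w$ becomes $t^{-1}\bar z$.
\end{itemize}
In each case the scalars $x^Ty$ and $z^Tw$ cancel against their inverses. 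This matches the stated matrix.

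The projector identities \eqref{projectors} are stated earlier in the paper, but I would still verify them from Lemma~\ref{thmMPdoublestar} and \eqref{eq:double-star-canonical}, because everything rests on them. Multiplying out $MM^\dagger$, the cross blocks cancel:
\begin{itemize}
\item the $(1,4)$ block is $x^T(-s^{-1}av^{-1}\bar x w^\ast)+a\,v^{-1}w^\ast=0$, using $x^T\bar x=s$;
\item the $(3,2)$ block is $b\,u^{-1}y^\ast+z^T(-t^{-1}bu^{-1}\bar z y^\ast)=0$, using $z^T\bar z=t$.
\end{itemize}
$M^\dagger M$ is checked in the same way. The main obstacle is only this bookkeeping: correctly tracking which scalar products $x^T\bar x=s$, $z^T\bar z=t$, $x^Ty$ and $z^Tw$ collapse. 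No conceptual difficulty is expected.
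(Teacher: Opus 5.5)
Your proposal is correct and is essentially the paper's own argument. The paper also invokes $M^{\core}=M^\sharp MM^\dagger$ and $M_{\core}=M^\dagger MM^\sharp$ and lets the block-diagonal projectors $MM^\dagger$ and $M^\dagger M$ from \eqref{projectors} act on the block columns (respectively block rows) of $M^\sharp$, and your entry-by-entry cancellations all check out. The appeal to $MM^\sharp=M^\sharp M$ is unnecessary, since the regroupings $M^\sharp(MM^\dagger)$ and $(M^\dagger M)M^\sharp$ follow from associativity alone.
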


This section completes the analysis of core and dual core invertibility for
matrices associated with double star digraphs. In this setting, core-type
invertibility is entirely determined by the simultaneous existence of the
Moore--Penrose inverse and the group inverse. In the next section we turn to matrices of higher index, for which the group inverse no longer exists and core EP-type inverses provide the appropriate framework.


\section{Core EP and dual core EP invertibility}\label{sec:coreEP}

We now turn to the study of core EP and dual core EP invertibility for matrices
associated with double star digraphs. Throughout this section, $M$ is assumed
to be of the canonical form \eqref{eq:double-star-canonical}  and to satisfy $i(M) > 1$.

We analyse separately the three mutually exclusive Cases I, II, and III described
in the preliminaries. In each case, the Drazin inverse $M^D$ is known explicitly
in block form.

First, consider Case I, where $x^Ty=z^Tw=0$.
In this case, $i(M)=2$ and the Drazin inverse of $M$ is given explicitly by \eqref{CaseI_Dinv}. A direct block multiplication yields
$$M^2 = \mxl{cc|cc}
ab & 0 & 0 & \mathit{az^T } \\
0& y\mathit{x^T } & ay & 0 \\
\hline
 0 & b\mathit{x^T } & ab & 0 \\
bw & 0 & 0 & w\mathit{z^T }
\mxr,$$
and therefore $$
\left(M^2 \right)^*= \mxl{cc|cc}
\ovl{a}\ovl{b} & 0 & 0 & \mathit{\ovl{b}w^*} \\
0& \ovl{x}y^* & \ovl{b}\ovl{x} & 0 \\
\hline
 0 & \ovl{a}y^* & \ovl{a}\ovl{b} & 0 \\
\bar{a}\bar{z} & 0 & 0 & \ovl{z}w^*
\mxr.$$

From Proposition \ref{eq:13-14-canonical}, we know that the existence of core EP and dual core EP inverses of $M$ depends on the existence of $(1,3)$-- and $(1,4)$--inverses of $M^2$, respectively. We analyze these two situations separately.

To study the existence of a $(1,3)$--inverse of $M^2$, we consider the Gram-type product $(M^2)^*M^2$. A straightforward computation gives
$$\left(M^2 \right)^* M^2
= \mxl{cc|cc} b\ovl{b}r & 0 & 0 & \ovl{b}rz^T  \\
0 & \ovl{x}hx^T  & a\ovl{x}h & 0 \\ \hline
0 & \ovl{a}hx^T  & a\ovl{a}h & 0 \\
b\ovl{z}r & 0 & 0 & \ovl{z}rz^T \mxr,$$
where, as in \eqref{eq:all_scalar}, $r= a\ovl{a}+w^{*}w$ and $h = b\ovl{b} + y^{*}y$. Up to permutation similarity, this matrix decomposes as a block diagonal matrix,
$$ \left(M^2 \right)^* M^2\underset{\text{perm.}}{\approx}\mxl{cc} rA & 0 \\ 0 & hB \mxr,$$
with $A=\mxl{cc} b\ovl{b} & \ovl{b}z^T  \\ b\bar{z} & \ovl{z}z^T  \mxr$ and $B=\mxl{cc}a\ovl{a} & \ovl{a}x^T  \\ a\ovl{x} & \ovl{x}x^T  \mxr.$

Now Proposition \ref{eq:13-14-canonical} asserts that the existence of a $(1,3)$--inverse of $M^2$ is equivalent to $\operatorname{rank}\left(\left(M^2 \right)^* M^2 \right) = \operatorname{rank}(M^2)=2  $, which in turn means $r, h\in \FF\setminus\{0\}.$ Assuming these conditions, $\left(\left(M^2 \right)^*  M^2  \right)^{-} \left(M^2 \right)^*\in M^2\{1, 3\}$. We therefore seek a von Neumann inverse of $\left(M^2 \right)^*M^2 $. We may take
 $$\left[\left(M^2 \right)^*\left(M^2 \right) \right]^{-} \underset{\text{perm.}}{\approx}  \mxl{cc} r^{-1}A^{-} & 0 \\ 0 & h^{-1}B^{-}\mxr. $$
 We next construct explicit von Neumann inverses for $A$ and $B$. Considering the full rank factorizations $$A=  \mx{\ovl{b} \\ \ovl{z}} \bmx b & z^T \emx = F_AG_A$$ and $$B= \mx{\ovl{a} \\ \ovl{x}} \bmx a & x^T \emx = F_BG_B,$$ and choosing von Neumman inverses 
 $$F_A^{-}= \bmx \ovl{b}^{-1} & 0\emx, \quad G_A^{-} = \bmx b^{-1} \\ 0\emx, \quad F_B^{-} = \bmx \ovl{a}^{-1} & 0\emx, \quad \text{and}\quad G_B^{-} = \bmx a^{-1} \\ 0\emx,$$
 of $F_A,G_A,F_B$ and $G_B$, respectively, satisfying $F_A^{^-}F_A = I$, $G_A G_A^{^-}=I$ and similarly for $B$, we obtain
 $$A^{^-} = G_A^{^-}F_A^{^-}=\mxl{cc} (b\ovl{b})^{-1} & 0 \\ 0 & 0 \mxr$$
and 
$$B^{-} =G_B^{^-}F_B^{^-}=\mxl{cc} (a\ovl{a})^{-1} & 0 \\ 0 & 0 \mxr.$$

Hence,
\begin{eqnarray*}
 \left[\left(M^2 \right)^*\left(M^2 \right) \right]^{-} \left(M^2 \right)^* &\underset{\text{perm.}}{\approx}& \mxl{cc} r^{-1}\mxl{cc} (b\ovl{b})^{-1} & 0 \\ 0 & 0 \mxr & 0 \\ 0 &  h^{-1}\mxl{cc} (a\ovl{a})^{-1} & 0 \\ 0 & 0 \mxr \mxr \mxl{cc|cc}
\ovl{a}\ovl{b} & \mathit{\ovl{b}w^*} & 0 & 0 \\
\bar{a}\bar{z} & \ovl{z}w^* & 0 & 0 \\
\hline
 0 & 0 & \ovl{a}\ovl{b} & \ovl{a}y^* \\
0 & 0 & \ovl{b}\ovl{x} &  \ovl{x}y^*
\mxr\\
&=& \mxl{cc} r^{-1}\mxl{cc} \ovl{a}b^{-1} & b^{-1}w^* \\ 0 & 0 \mxr & 0 \\ 0 &  h^{-1}\mxl{cc} a^{-1}\ovl{b} & a^{-1}y^* \\ 0 & 0 \mxr \mxr,
\end{eqnarray*}
which implies $  \mxl{cc|cc}
(br)^{-1}\ovl{a} & 0 & 0 & (br)^{-1}w^* \\
0 & 0 & 0 & 0 \\
\hline
 0 & (ah)^{-1}y^* & (ah)^{-1}\ovl{b} & 0 \\
0 & 0 & 0 &  0
\mxr \in  M^2\{1,3\}. $

We can now compute the core EP inverse of $M$ using Lemma \ref{CoreEP1,2}, since
$M^{\coreEP} = M^D M^2 (M^2)^{(1,3)}$. Using the explicit expressions of $M^D$ given in
\eqref{CaseI_Dinv}, of $M^2$, and of $(M^2)^{(1,3)}$, when $x^T y = 0 = z^T w$ and $r,h\neq 0$,
\begin{align*}
M^{\coreEP} &=  \mxl{cc|cc}
0 & h^{-1}y^* & \ovl{b}h^{-1} & 0 \\
\ovl{a}(br)^{-1}y & 0 & 0 & (br)^{-1}yw^* \\
\hline
 \ovl{a}r^{-1} & 0 & 0 & r^{-1}w^* \\
0 & (ah)^{-1}wy^* & \ovl{b}(ah)^{-1}w &  0
\mxr.
\end{align*}

An analogous argument applies to the computation of the \emph{dual core EP inverse} of $M$, which is obtained by analysing the $(1,4)$--invertibility of $M^2$. To this end, we consider the product
$$M^2 \left(M^2 \right)^* =  \mxl{cc|cc} a\ovl{a}p & 0 & 0 & apw^{*} \\
0 & qyy^{*} & \ovl{b}qy & 0 \\ \hline
0 & qby^{*} & b\ovl{b}q & 0 \\
\ovl{a}pw & 0 & 0 & pww^{*}\mxr,$$
where, as in \eqref{eq:all_scalar}, the scalars are given by $p=b\ovl{b}+z^\ast z$ and $q =a\ovl{a} + x^\ast x$.

As before, this matrix admits a convenient block diagonal representation up to permutation
similarity. More precisely,
$$M^2 \left(M^2 \right)^* \underset{\text{perm.}}{\approx}\mxl{cc} pC & 0 \\ 0 & qD \mxr,$$
where
$$C= \mxl{cc} a\ovl{a} & aw^{*} \\ \ovl{a}w & ww^{*} \mxr$$ and $$D= \mxl{cc}b\ovl{b} & by^{*} \\ \ovl{b}y & yy^{*} \mxr.$$
By Proposition \ref{eq:13-14-canonical}, the condition $M^2\{1, 4\}\ne \varnothing$ is equivalent to $\operatorname{rank}\left (M^2(M^2)^*\right) = \operatorname{rank}(M^2)=2$, which in turn holds if and only if $p, q \in \FF\setminus\{0\}$. Under these assumptions, it follows that $\left(M^2 \right)^*\left[ M^2\left(M^2 \right)^* \right]^{-}\in M^2\{1, 4\}$.

To construct an explicit von Neumann inverse of $M^2(M^2)^*$, we observe that such inverse may be chosen, up to permutation similarity, in the block diagonal form
\[
\bigl[M^2(M^2)^*\bigr]^{-}
\;\underset{\mathrm{perm.}}{\approx}\;
\mxl{cc}
p^{-1}C^{-} & 0 \\
0 & q^{-1}D^{-}
\mxr,
\]
where $C^{-}$ and $D^{-}$ are von Neumann inverses of $C$ and $D$, respectively.
We therefore proceed by constructing suitable von Neumann inverses for these two matrices.

Observe that $C$ admits the rank-one factorization
\[
C=
\mxl{c}
a \\ w
\mxr
\mxl{cc}
\ovl{a} & w^{*}
\mxr
=F_C G_C.
\]
We may choose von Neumann inverses
\[
F_C^{-}=
\mxl{cc}
a^{-1} & 0
\mxr
\quad \text{and}\quad
G_C^{-}=
\mxl{c}
\ovl{a}^{-1} \\ 0
\mxr
\]
of $F_C$ and $G_C$, respectively, which satisfy $F_C^{-}F_C=I$ and $G_C G_C^{-}=I$.
It follows that
\[
C^{-}=G_C^{-}F_C^{-}=
\mxl{cc}
(a\ovl{a})^{-1} & 0 \\
0 & 0
\mxr 
\]
is a von Neumann inverse of $C$.

An entirely analogous argument applies to the matrix $D$. Indeed, $D$ admits the full rank factorization
\[
D=
\mxl{c}
b \\ y
\mxr
\mxl{cc}
\ovl{b} & y^{*}
\mxr
=F_D G_D.
\]
We may select
\[
F_D^{-}=
\mxl{cc}
b^{-1} & 0
\mxr
\quad \text{and}\quad
G_D^{-}=
\mxl{c}
\ovl{b}^{-1} \\ 0
\mxr
\]
as von Neumman inverses of $F_D$ and$G_D$, respectively, satisfying $F_D^{-}F_D=I$ and $G_D G_D^{-}=I$. Consequently,
\[
D^{-}=G_D^{-}F_D^{-}=
\mxl{cc}
(b\ovl{b})^{-1} & 0 \\
0 & 0
\mxr
\]
is a von Neumman inverse of $D$.

Therefore,
\begin{eqnarray*}
\left(M^2 \right)^*\left[\left(M^2 \right) \left(M^2 \right)^* \right]^{-} &\underset{\text{perm.}}{\approx} &\mxl{cc|cc}
\ovl{a}\ovl{b} & \mathit{\ovl{b}w^*} & 0 & 0 \\
\bar{a}\bar{z} & \ovl{z}w^* & 0 & 0 \\
\hline
 0 & 0 & \ovl{a}\ovl{b} & \ovl{a}y^* \\
0 & 0 & \ovl{b}\ovl{x} &  \ovl{x}y^*
\mxr \mxl{cc} p^{-1}\mxl{cc} (a\ovl{a})^{-1} & 0 \\ 0 & 0 \mxr & 0 \\ 0 &  q^{-1}\mxl{cc} (b\ovl{b})^{-1} & 0 \\ 0 & 0 \mxr \mxr\\
&=& \mxl{cc} p^{-1}\mxl{cc} a^{-1}\ovl{b} & 0 \\ a^{-1}\ovl{z} & 0 \mxr & 0 \\ 0 &  q^{-1}\mxl{cc} b^{-1}\ovl{a} & 0 \\ b^{-1}\ovl{x} & 0 \mxr \mxr,
\end{eqnarray*}
which implies $$
 \mxl{cc|cc}
(ap)^{-1}\ovl{b} & 0 & 0 & 0 \\
0 & 0 & (bq)^{-1}\ovl{x} & 0 \\
\hline
 0 & 0 & (bq)^{-1}\ovl{a} & 0 \\
(ap)^{-1}\ovl{z} & 0 & 0 &  0
\mxr \in M^2\{1, 4\}.$$

We are now in position to compute the dual core EP inverse of $M$.
By Lemma \ref{CoreEP1,2}, since $i(M)=2$, $M_{_{\coreEP}} = (M^2)^{(1,4)} M^2 M^{D}$.

Using the explicit expressions of $(M^2)^{(1,4)}$, $M^2$, and $M^D$ given in
\eqref{CaseI_Dinv}, when $x^Ty=z^Tw=0$ and $p,q\neq 0$, a straightforward block multiplication yields
\begin{align*}
M_{_{\coreEP}} 
&= \mxl{cc|cc}
0 & \ovl{b}(ap)^{-1}x^T  & \ovl{b}p^{-1} & 0\\
q^{-1}\ovl{x} & 0 & 0 & (bq)^{-1}\ovl{x}z^T \\ \hline
\ovl{a}q^{-1} & 0 & 0 & \ovl{a}(bq)^{-1}z^T \\
0 & (ap)^{-1}\ovl{z}x^T  & p^{-1}\ovl{z} & 0
\mxr.
\end{align*}

Summarizing the above analysis, we obtain the following result.

\begin{thm}
\label{thm:core-EP}
 Let $M$ be a matrix  with associated double star digraph $S_{(m+1),(n+1)}$ of the form (\ref{eq:double-star-canonical}) with $x^T y=z^T w=0$.
 Then
 \ben
 \item $M$ is core EP invertible if and only if $$r= a\ovl{a}+w^{*}w \ne 0 \text{ and } h = b\ovl{b} + y^{*}y \ne 0.$$
 In this case,
 $$M^{\coreEP} =\mxl{cc|cc}
0 & h^{-1}y^* & \ovl{b}h^{-1} & 0 \\
\ovl{a}(br)^{-1}y & 0 & 0 & (br)^{-1}yw^* \\
\hline
 \ovl{a}r^{-1} & 0 & 0 & r^{-1}w^* \\
0 & (ah)^{-1}wy^* & \ovl{b}(ah)^{-1}w &  0
\mxr.$$

 \item $M$ is dual core EP invertible if and only if $$p=b\ovl{b}+ z^*z  \ne 0 \text{ and }  q =a\ovl{a} + x^*x  \ne 0.$$
 In this case,
 $$M_{_{\coreEP}} =\mxl{cc|cc}
0 & \ovl{b}(ap)^{-1}x^T  & \ovl{b}p^{-1} & 0\\
q^{-1}\ovl{x} & 0 & 0 & (bq)^{-1}\ovl{x}z^T \\ \hline
\ovl{a}q^{-1} & 0 & 0 & \ovl{a}(bq)^{-1}z^T \\
0 & (ap)^{-1}\ovl{z}x^T  & p^{-1}\ovl{z} & 0
\mxr. $$
 \een
\end{thm}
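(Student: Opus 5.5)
The plan is to apply Lemma~\ref{CoreEP1,2} with $m=i(M)=2$, which holds in Case~I. Since $M^D$ always exists (formula \eqref{CaseI_Dinv}), core EP invertibility of $M$ is equivalent to $M^2\{1,3\}\neq\varnothing$. Likewise, dual core EP invertibility is equivalent to $M^2\{1,4\}\neq\varnothing$. By Proposition~\ref{eq:13-14-canonical}, these reduce to the rank equalities $\operatorname{rank}((M^2)^*M^2)=\operatorname{rank}(M^2)$ and $\operatorname{rank}(M^2(M^2)^*)=\operatorname{rank}(M^2)$, respectively. So the proof splits into a rank computation, giving the existence criteria, and a block multiplication, giving the formulas.

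\textbf{Existence.} From the block form of $M^2$ above, I would check that $\operatorname{rank}(M^2)=2$. The columns of $M^2$ take only two independent directions: the ones supported on the first and last block rows, $(ab,0,0,bw)^T$ and multiples of it, and the ones supported on the middle rows. Both are nonzero since $a,b\neq0$. Next I use the permutation similarity $(M^2)^*M^2\approx \operatorname{diag}(rA,hB)$. Here $A=F_AG_A$ and $B=F_BG_B$ have rank one, because $b\neq0$ and $a\neq0$ make the factors nonzero. Hence the rank of $(M^2)^*M^2$ equals $[r\neq0]+[h\neq0]$, and it equals $2$ exactly when $r,h\neq0$. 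The dual statement follows the same way from $M^2(M^2)^*\approx\operatorname{diag}(pC,qD)$, where $C$ and $D$ are rank one, so the condition is $p,q\neq0$. The one point needing care is the rank-one claim over a general involutive field: $G_A=\bmx b & z^T\emx$ is nonzero because $b\neq0$, even if $\bar z z^T$ happens to vanish, so the argument does not depend on $t$ or $s$ being nonzero.

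\textbf{Formulas.} Assuming $r,h\neq0$, I would take the explicit von Neumann inverse built from the left and right inverses $F_A^-,G_A^-,F_B^-,G_B^-$. This gives the exhibited $(M^2)^{(1,3)}=((M^2)^*M^2)^-(M^2)^*$, and its validity rests on Proposition~\ref{eq:13-14-canonical}. I then compute $M^D M^2 (M^2)^{(1,3)}$. It is convenient to first form $M^DM^2$, which is the spectral idempotent-type product, and then multiply on the right. The result is the stated matrix, simplified using $x^Ty=z^Tw=0$. The dual case is identical with $(M^2)^*(M^2(M^2)^*)^-$ and the product $(M^2)^{(1,4)}M^2M^D$.

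The main obstacle is bookkeeping in these final block products: tracking the conjugates $\bar a,\bar b$ and the permutation used to block-diagonalize. As a safeguard I would verify the final matrix $X$ directly against conditions (ii) $MX^2=X$ and (iii) $(MX)^*=MX$. For example, $MX$ should come out as the orthogonal projector $\operatorname{diag}(\ast)$ built from $h^{-1}$ and $r^{-1}$ Gram terms. Condition (i) I would check as $XM^3=M^2$. Uniqueness of the core EP inverse then confirms the formula independently of the choice of $(1,3)$-inverse.
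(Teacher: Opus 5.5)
Your proposal is correct and follows the paper's proof essentially step by step. It uses Lemma~\ref{CoreEP1,2} with $m=i(M)=2$, then the rank criterion of Proposition~\ref{eq:13-14-canonical} applied to $(M^2)^*M^2\underset{\text{perm.}}{\approx}\operatorname{diag}(rA,hB)$ and $M^2(M^2)^*\underset{\text{perm.}}{\approx}\operatorname{diag}(pC,qD)$ with nonzero rank-one blocks. Finally it feeds the explicit von Neumann inverses from full rank factorizations into $M^DM^2(M^2)^{(1,3)}$ and $(M^2)^{(1,4)}M^2M^D$.

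Your check that $\operatorname{rank}(M^2)=2$ supplies a step the paper only asserts. Two small slips are worth fixing. The scalar that may vanish over a general involutive field is $t=z^*z$, not the outer product $\ovl{z}z^T$, all of whose entries are nonzero. Also, $MX=M^2(M^2)^{(1,3)}$ is block diagonal only after the permutation, not literally diagonal; it has off-diagonal blocks such as $ar^{-1}w^*$.
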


We now turn to the analysis of Case II, where $x^T y \neq 0 = z^T w$ and $\zeta = x^T y + ab\neq 0$. In this case, $i(M)=3$ and the Drazin inverse of $M$ is given explicitly by  \eqref{CaseII_Dinv}.

In order to compute the core EP inverse of $M$, Lemma~\ref{CoreEP1,2} requires the
existence of a $(1,3)$--inverse of $M^3$. We therefore begin by computing $M^3$ and $(M^3)^*$. A direct block multiplication yields
\[
M^3 =
\mxl{cc|cc}
0 & \zeta x^T  & a\zeta & 0 \\
\zeta y & 0 & 0 & ayz^T  \\ \hline
\zeta b & 0 & 0 & abz^T  \\
0 & bwx^T  & abw & 0
\mxr
\]
and
\[
(M^3)^* =
\mxl{cc|cc}
0 & \ovl{\zeta} y^* & \ovl{\zeta}\ovl{b} & 0 \\
\ovl{\zeta}\ovl{x} & 0 & 0 & \ovl{b}\ovl{x}w^* \\ \hline
\ovl{a}\ovl{\zeta} & 0 & 0 & \ovl{a}\ovl{b}w^* \\
0 & \ovl{a}\ovl{z}y^* & \ovl{a}\ovl{b}\ovl{z} & 0
\mxr .
\]

By Proposition~\ref{eq:13-14-canonical}, the existence of a $(1,3)$--inverse of $M^3$ is
equivalent to the condition $\operatorname{rank}\!\bigl((M^3)^*M^3\bigr)= \operatorname{rank}(M^3)=2$. We therefore proceed to analyse the Gram-type product $(M^3)^*M^3$. A straightforward computation shows that
$$(M^3)^*M^3 =
\mxl{cc|cc}
\ovl{\zeta}\zeta h & 0 & 0 & a\ovl{\zeta}h z^T \\
0 & \beta \ovl{x}x^T & a\beta \ovl{x} & 0 \\ \hline
0 & \ovl{a}\beta x^T & a\ovl{a}\beta & 0 \\
\ovl{a}\zeta h \ovl{z} & 0 & 0 & a\ovl{a}h \ovl{z}z^T
\mxr,$$
where, as in \eqref{eq:all_scalar}, $h = b\ovl{b}+y^*y$ and $\beta = \zeta\ovl{\zeta} + b\ovl{b}w^*w$.

Up to permutation similarity, $(M^3)^*M^3$ decomposes into a block diagonal matrix,
$$(M^3)^*M^3
\;\underset{\mathrm{perm.}}{\approx}\;
\mxl{cc}
hA & 0 \\
0 & \beta B
\mxr,
$$
where
\[
A =
\mxl{cc}
\zeta\ovl{\zeta} & a\ovl{\zeta}z^T \\
\ovl{a}\zeta\ovl{z} & a\ovl{a}\,\ovl{z}z^T
\mxr,
\qquad
B =
\mxl{cc}
a\ovl{a} & \ovl{a}x^T \\
a\ovl{x} & \ovl{x}x^T
\mxr .
\]

Since $\operatorname{rank}(M^3)=2$, Proposition~\ref{eq:13-14-canonical} shows that
$M^3\{1,3\}\neq\varnothing$ if and only if $h,\beta \in \FF\setminus\{0\}$. Assuming these conditions, we may choose a von Neumann inverse of $(M^3)^*M^3$ of the
form
\[
\bigl[(M^3)^*M^3\bigr]^{-}
\;\underset{\mathrm{perm.}}{\approx}\;
\mxl{cc}
h^{-1}A^{-} & 0 \\
0 & \beta^{-1}B^{-}
\mxr .
\]

We now construct explicit von Neumann inverses for $A$ and $B$. Consider the full rank
factorizations
\[
A=
\mxl{c}
\ovl{\zeta} \\ \ovl{a}\ovl{z}
\mxr
\mxl{cc}
\zeta & az^T
\mxr
=F_AG_A,
\qquad
B=
\mxl{c}
\ovl{a} \\ \ovl{x}
\mxr
\mxl{cc}
a & x
\mxr
=F_BG_B .
\]
Choosing
\[
F_A^{-}=\bmx \ovl{\zeta}^{-1} & 0 \emx,
\quad
G_A^{-}=\mx{\zeta^{-1} \\ 0}, \quad
F_B^{-}=\bmx \ovl{a}^{-1} & 0 \emx,
\quad \text{and} \quad
G_B^{-}=\mx{a^{-1} \\ 0},
\]
we obtain $F_A^{-}F_A=I$, $G_AG_A^{-}=I$ and similarly for $B$, which yields
\[
A^{-}=G_A^{-}F_A^{-}=
\mxl{cc}
(\zeta\ovl{\zeta})^{-1} & 0 \\
0 & 0
\mxr,
\qquad
B^{-}=G_B^{-}F_B^{-}
\mxl{cc}
(a\ovl{a})^{-1} & 0 \\
0 & 0
\mxr .
\]
Hence,

$\left[(M^3)^*M^3\right]^{^-}(M^3)^{*} \underset{\text{perm.}}{\approx} \mxl{cc} h^{-1} \mxl{cc} (\zeta\ovl{\zeta})^{-1} & 0 \\ 0 & 0 \mxr & 0 \\ 0 & \beta^{-1} \mxl{cc} (a\ovl{a})^{-1} & 0 \\ 0 & 0 \mxr \mxr \mxl{cc|cc}
0 & 0 & \bar{\zeta}\bar{b} & \bar{\zeta}y^* \\
0 & 0 & \ovl{a}\ovl{b}\ovl{z} &\bar{a}\bar{z}y^* \\
\hline
 \bar{a}\ovl{\zeta} & \ovl{a}\ovl{b}w^*  & 0 & 0 \\
\ovl{\zeta}\ovl{x} & \ovl{b}\ovl{x}w^* & 0 & 0
\mxr$

\begin{align*}
    &= \mxl{cc} 0 & (\zeta h)^{-1} \mxl{cc} \bar{b} & y^* \\ 0 & 0 \mxr \\ (a\beta)^{-1} \mx{\ovl{\zeta} & \ovl{b}w^* \\ 0 & 0 } & 0\mxr,
\end{align*}
which implies that $$\mxl{cccc} 0 & (\zeta h)^{-1}y^*  &  (\zeta h)^{-1}\bar{b} & 0  \\ 0 & 0 & 0 & 0 \\ (a \beta)^{-1}\ovl{\zeta} & 0 & 0 & (a\beta)^{-1}\ovl{b}w^* \\ 0 & 0 & 0 & 0\mxr \in M^3\{1,3\}.$$

We may now compute the core EP inverse of $M$. By Lemma~\ref{CoreEP1,2}, since
$i(M)=3$, $M^{\coreEP}=M^D M^3 (M^3)^{(1,3)}$. Using the explicit expression of $M^D$ given in \eqref{CaseII_Dinv} and the explicit expressions of $M^3$ and $(M^3)^{(1,3)}$, when  
$x^Ty\neq 0$, $z^T w=0$, $h\neq 0$ and $\beta\neq 0$, a direct block multiplication yields
\begin{align*}
M^{\coreEP} &= \mxl{cccc} 0 & h^{-1} y^*  & h^{-1}\bar{b} & 0  \\
\beta^{-1}\ovl{\zeta} y & 0 & 0 & \beta^{-1} \bar{b} y w^* \\
\beta^{-1}\ovl{\zeta}b  & 0 & 0 & \beta^{-1}b\bar{b} w^* \\
0 & bh^{-1}\zeta^{-1}wy^* & b\bar{b}h^{-1}\zeta^{-1}w & 0\mxr.
\end{align*}

We now follow the same strategy to determine the \emph{dual core EP inverse} of $M$.
In this case, we focus on the set $M^3\{1,4\}$. To this end, we first compute the product $M^3(M^3)^*$. Using the explicit expressions of $M^3$ and $(M^3)^*$ obtained above, we find
\begin{align*}
M^3(M^3)^*=\mxl{cccc}
\zeta\ovl{\zeta}q & 0 & 0 & \zeta\ovl{b}qw^* \\
0 & \alpha yy^* & \alpha\ovl{b}y & 0 \\
0 & \alpha by^* & \alpha b\ovl{b} & 0 \\
b\ovl{\zeta}qw & 0 & 0 & b\ovl{b}qww^*
\mxr ,
\end{align*}
where, as in \eqref{eq:all_scalar}, $q = a\ovl{a}+x^*x$ and $\alpha = \zeta\ovl{\zeta} + a\ovl{a}z^*z$. As before, this matrix admits a convenient block diagonal decomposition up to permutation
similarity,
\[
M^3(M^3)^*
\;\underset{\mathrm{perm.}}{\approx}\;
\mxl{cc}
qC & 0 \\
0 & \alpha D
\mxr ,
\]
where
\[
C=
\mxl{cc}
\zeta\ovl{\zeta} & \zeta\ovl{b}w^* \\
\ovl{\zeta}bw & b\ovl{b}ww^*
\mxr,
\qquad
D=
\mxl{cc}
b\ovl{b} & by^* \\
\ovl{b}y & yy^*
\mxr .
\]

Proposition~\ref{eq:13-14-canonical} now shows that the existence of a $(1,4)$--inverse
of $M^3$ is equivalent to $\operatorname{rank}\!\bigl(M^3(M^3)^*\bigr)=\operatorname{rank}\!\bigl((M^3)^*\bigr)=2$, which holds if and only if $q,\alpha \in \FF\setminus\{0\}$. Assuming these conditions, we may choose a von Neumann inverse of $M^3(M^3)^*$ given by
\[
\bigl[M^3(M^3)^*\bigr]^{-}
\;\underset{\mathrm{perm.}}{\approx}\;
\mxl{cc}
q^{-1}C^{-} & 0 \\
0 & \alpha^{-1}D^{-}
\mxr .
\]
To construct explicit von Neumann inverses of $C$ and $D$, we consider the full rank factorizations
\[
C=
\mxl{c}
\zeta \\ bw
\mxr
\mxl{cc}
\ovl{\zeta} & \ovl{b}w^*
\mxr
=F_CG_C,
\qquad
D=
\mxl{c}
b \\ y
\mxr
\mxl{cc}
\ovl{b} & y^*
\mxr
=F_DG_D .
\]
Choosing
\[
F_C^{-}=\bmx \zeta^{-1} & 0 \emx,
\quad
G_C^{-}=\mx{\ovl{\zeta}^{-1} \\ 0},
\quad
F_D^{-}=\bmx b^{-1} & 0 \emx
\quad \text{and} \quad
G_D^{-}=\mx{\ovl{b}^{-1} \\ 0},
\]
we obtain $F_C^{-}F_C=I$, $G_CG_C^{-}=I$, and analogously for $D$, which yields
\[
C^{-}=G_C^{-}F_C^{-}=
\mxl{cc}
(\zeta\ovl{\zeta})^{-1} & 0 \\
0 & 0
\mxr,
\qquad
D^{-}=G_D^{-}F_D^{-}=
\mxl{cc}
(b\ovl{b})^{-1} & 0 \\
0 & 0
\mxr .
\]
Consequently,
\begin{align*}(M^3)^*\left[M^3(M^3)^*\right]^{^-} & \underset{\text{perm.}}{\approx} \mxl{cc|cc}
0 & 0 & \ovl{\zeta}\bar{b} & \ovl{\zeta}y^* \\
0 & 0 & \ovl{a}\ovl{b}\ovl{z} &\bar{a}\bar{z}y^* \\
\hline
 \ovl{a}\ovl{\zeta} & \ovl{a}\ovl{b}w^*  & 0 & 0 \\
\ovl{\zeta}\ovl{x} & \ovl{b}\ovl{x}w^* & 0 & 0
\mxr \mxl{cc} q^{-1}\mxl{cc}(\zeta \ovl{\zeta})^{-1} & 0 \\ 0 & 0\mxr & 0 \\ 0 & \alpha^{-1} \mxl{cc}(b\ovl{b})^{-1} & 0 \\ 0 & 0\mxr \mxr \\
& = \mxl{cc|cc} 0 & 0 & (\alpha b)^{-1}\ovl{\zeta} & 0 \\
        0 & 0 & \ovl{a}(\alpha b)^{-1}\ovl{z} & 0 \\ \hline
        (\zeta q)^{-1}\ovl{a} & 0 & 0 & 0 \\
        (\zeta q)^{-1}\ovl{x} & 0 & 0 & 0 \mxr,
\end{align*} and we can assert $$\mxl{cc|cc} 0 & 0 & (\alpha b)^{-1}\ovl{\zeta} & 0 \\
        (\zeta q)^{-1}\ovl{x} & 0 & 0 & 0 \\ \hline
        (\zeta q)^{-1}\ovl{a} & 0 & 0 & 0 \\
        0 & 0 & (\alpha b)^{-1}\bar{a}\ovl{z} & 0 \mxr \in M^3\{1,4\}.$$

We are now in position to compute the dual core EP inverse of $M$.
By Lemma~\ref{CoreEP1,2}, since $i(M)=3$, the dual core EP inverse of $M$ is given by $M_{_{\coreEP}}  = (M^3)^{(1,4)}M^3M^D$. Using the explicit expressions of $(M^3)^{(1,4)}$, $M^3$, and $M^D$ given in \eqref{CaseII_Dinv}, when $x^Ty\neq 0$, $z^Tw=0$, $q\neq 0$ and $\alpha\neq 0$, a direct block
multiplication yields
\begin{align*}
    M_{_{\coreEP}}        &= \mxl{cc|cc} 0 & \ovl{\zeta}\alpha^{-1}x^T  & \ovl{\zeta}\alpha^{-1}a & 0  \\
        q^{-1}\ovl{x} & 0 & 0 &  (q \zeta)^{-1}a\ovl{x} z^T  \\ \hline
        \ovl{a}q^{-1} & 0 & 0 & (q \zeta)^{-1}a\ovl{a} z^T  \\
        0 & \bar{a}\alpha^{-1}\ovl{z}x^T  & a \ovl{a}\alpha^{-1} \ovl{z} & 0 \mxr.
\end{align*}

We have therefore proved the following result.

\begin{thm}\label{thm:core-EP_II}  Let $M$ be a matrix over a field with associated double star digraph $S_{(m+1),(n+1)}$ of the form \eqref{eq:double-star-canonical} with $x^T y\ne 0$, $z^T w=0$ and $\zeta = x^T y+ab \ne 0$.
Then
\ben
\item M is core EP invertible if and only if
$$ h=b\ovl{b}+y^*y \neq 0 \text{ and } \beta = \zeta \ovl{\zeta} + b\ovl{b}w^*w \neq 0 .$$
In this case
$$M^{\coreEP}= \mxl{cc|cc} 0 & h^{-1} y^*  & h^{-1}\bar{b} & 0  \\
\beta^{-1}\ovl{\zeta} y & 0 & 0 & \beta^{-1} \bar{b} y w^* \\ \hline
\beta^{-1}\ovl{\zeta}b  & 0 & 0 & \beta^{-1}b\bar{b} w^* \\
0 & bh^{-1}\zeta^{-1}wy^* & b\bar{b}h^{-1}\zeta^{-1}w & 0\mxr$$

\item  M is dual core EP invertible if and only if
$$q= a\ovl{a}+x^*x \neq 0\text{ and }\alpha= \zeta\ovl{\zeta}+a\ovl{a}z^*z \neq 0.$$
In this case,
$$M_{_{\coreEP}}= \mxl{cc|cc} 0 & \ovl{\zeta}\alpha^{-1}x^T  & \ovl{\zeta}\alpha^{-1}a & 0  \\
        q^{-1}\ovl{x} & 0 & 0 &  (q \zeta)^{-1}a\ovl{x} z^T  \\ \hline
        \ovl{a}q^{-1} & 0 & 0 & (q \zeta)^{-1}a\ovl{a} z^T  \\
        0 & \bar{a}\alpha^{-1}\ovl{z}x^T  & a \ovl{a}\alpha^{-1} \ovl{z} & 0 \mxr.$$
\een
\end{thm}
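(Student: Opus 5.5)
The plan is to apply Lemma~\ref{CoreEP1,2} with $m=i(M)=3$, which Case~II provides, exactly as was done for Theorem~\ref{thm:core-EP}. Part (1) then reduces to two tasks: decide when $M^3\{1,3\}\neq\varnothing$, and evaluate $M^{\coreEP}=M^DM^3(M^3)^{(1,3)}$ using the Case~II Drazin inverse \eqref{CaseII_Dinv}. Part (2) is the dual statement and will be handled by $(1,4)$--inverses and $M_{_{\coreEP}}=(M^3)^{(1,4)}M^3M^D$.

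\textbf{Step 1: rank of $M^3$.} I compute $M^3$ by block multiplication, using $z^Tw=0$ and $\zeta=x^Ty+ab$. Its nonzero rows and columns split, after a permutation, into two rank-one pieces. One is supported on the rows $\{1, U, v\}$ and the other on $\{u', \ldots\}$ (the remaining index groups). Since $\zeta\neq0$ and $a,b\neq0$, each piece is nonzero, so $\operatorname{rank}(M^3)=2$.

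\textbf{Step 2: existence of the $(1,3)$--inverse.} By Proposition~\ref{eq:13-14-canonical}, $M^3\{1,3\}\neq\varnothing$ if and only if $\operatorname{rank}((M^3)^*M^3)=2$. I form $(M^3)^*M^3$ and permute it to $\operatorname{diag}(hA,\beta B)$, where $A=\ovl{g}^{\,T}g$ and $B=\ovl{f}^{\,T}f$ for the row vectors $g=[\zeta\ \ az^T]$ and $f=[a\ \ x^T]$. These are rank-one Gram matrices whose $(1,1)$ entries $\zeta\ovl\zeta$ and $a\ovl a$ are nonzero. The rank is therefore $2$ exactly when $h\neq0$ and $\beta\neq0$. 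Obtaining the scalar factors $h$ and $\beta=\zeta\ovl\zeta+b\ovl b\,w^*w$ requires collecting the column norms carefully; for example, the column with entries $\zeta,\zeta y,\zeta b$ contributes $\zeta\ovl\zeta(\ldots)$, and so on.

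\textbf{Step 3: constructing $(M^3)^{(1,3)}$.} I choose von Neumann inverses $A^-$ and $B^-$ supported on the $(1,1)$ entry, built from left and right inverses of the full rank factors as before. Then $[(M^3)^*M^3]^-(M^3)^*$ is an explicit sparse $(1,3)$--inverse. Multiplying $M^DM^3$ into this gives the stated block formula. The dual case runs the same way with $M^3(M^3)^*\approx\operatorname{diag}(qC,\alpha D)$, which yields the conditions $q,\alpha\neq0$.

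\textbf{Main obstacle.} The difficult part is the bookkeeping of the final triple products, in particular tracking which of $\zeta$, $\ovl\zeta$, $a$ and $b$ cancel. To control this, I would check the result against the defining equations: $MX^2=X$, $(MX)^*=MX$, and $XM^4=M^3$.
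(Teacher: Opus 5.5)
Your proposal is correct and follows the paper's proof essentially step for step. You use Lemma~\ref{CoreEP1,2} with $m=i(M)=3$, the permuted block-diagonal forms $(M^3)^*M^3\approx\operatorname{diag}(hA,\beta B)$ and $M^3(M^3)^*\approx\operatorname{diag}(qC,\alpha D)$ (rank-one Gram blocks with nonzero $(1,1)$ entries), von Neumann inverses supported on those entries, and the products $M^DM^3(M^3)^{(1,3)}$ and $(M^3)^{(1,4)}M^3M^D$.

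There are two small slips to fix when you write it out. First, the rank-one pieces of $M^3$ are $\left[\begin{array}{c}\zeta\\ bw\end{array}\right]\left[\begin{array}{cc}x^T & a\end{array}\right]$ in the first and fourth block rows, and $\left[\begin{array}{c}y\\ b\end{array}\right]\left[\begin{array}{cc}\zeta & az^T\end{array}\right]$ in the second and third. Second, the first column of $M^3$ has entries $0,\zeta y,\zeta b,0$, not $\zeta,\zeta y,\zeta b$; this is exactly why its squared norm is $\zeta\ovl{\zeta}h$.
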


We finally consider Case III, characterized by the conditions $x^T y \neq 0=z^T w$ and $\zeta = x^T y + ab = 0$. In this situation, it follows from \eqref{CaseIII_Dinv} that $i(M)=5$ and $M^D=0$. Consequently, the matrix $M$ is nilpotent of index $5$. Since $M^5=0$, we have $\operatorname{rank}(M^5)=\operatorname{rank}\bigl((M^5)^*M^5\bigr)=\operatorname{rank}\bigl(M^5(M^5)^*\bigr)=0$.
By Proposition~\ref{eq:13-14-canonical}, this implies that
$(M^5)\{1,3\}\neq\varnothing$ and $(M^5)\{1,4\}\neq\varnothing$. We may therefore apply Lemma \ref{CoreEP1,2}. The matrix $M$ is both core EP invertible and dual core EP invertible. Moreover, in both cases the corresponding inverse is the zero matrix. This completes the analysis of Case III and, consequently, the proof of the results
of this section.


\section{MPCEP and \texorpdfstring{$^\ast\!$}{*}CEPMP Inverses}\label{sec:mpcep-cepmp}

In this section we derive explicit formulas for the Moore--Penrose--core EP (MPCEP) inverse
 and the $^\ast\!$core EP--Moore--Penrose ($^\ast\!$CEPMP) inverse for matrices associated with
double star digraphs. 

Since the existence of MPCEP and $^\ast\!$CEPMP inverses requires, respectively,
the existence of the Moore--Penrose inverse together with core--EP or dual core--EP inverse, the analysis splits according to the three structural cases introduced in Section \ref{sec:prelim}.

Recall that whenever $M^\dagger$ and $M^{\coreEP}$ exist, the MPCEP inverse of $M$ is given by \eqref{eq:def-mpcep}, and that whenever $M^\dagger$ and $M_{\coreEP}$ exist, the $^\ast\!$CEPMP inverse of $M$ is given by \eqref{eq:def-cepmp}.

The proofs of the following lemmas are immediate consequences of these identities, combined with the explicit
expressions of $M^\dagger M$ or $MM^\dagger$ from \eqref{projectors} and of
$M^{\coreEP}$ or $M_{\coreEP}$ from Theorems \ref{thm:core-EP}
and \ref{thm:core-EP_II}.

\begin{lem}\label{MPCEP_CaseI} Let $M$ be a matrix  with associated double star digraph $S_{(m+1),(n+1)}$ of the form (\ref{eq:double-star-canonical}) with $x^T y=z^T w=0$. Assume that $M^\dagger$ exists and that $M$ is core EP
invertible (equivalently, the scalars $s,u,t,v,r,h$ defined in \eqref{eq:all_scalar} are nonzero).
Then the MPCEP inverse of $M$ is given by
$$M^{\dagger,\coreEP} =\mxl{cc|cc}
0 & h^{-1}y^* & \ovl{b}h^{-1} & 0 \\
0  & 0 & 0 & 0 \\ \hline
\ovl{a}r^{-1} & 0 & 0 & r^{-1}w^* \\
0 & 0 & 0 & 0
\mxr.$$
\end{lem}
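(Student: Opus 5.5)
The plan is to use the explicit representation \eqref{eq:def-mpcep}, $M^{\dagger,\coreEP}=M^\dagger M M^{\coreEP}$, and to reduce the computation to left multiplication by the projector $M^\dagger M$, whose block form is recorded in \eqref{projectors}. First I check that the hypotheses give both ingredients. By Lemma \ref{thmMPdoublestar}, $M^\dagger$ exists because $s,u,t,v\neq 0$. In Case I, Theorem \ref{thm:core-EP} gives $M^{\coreEP}$ because $r,h\neq 0$. So the MPCEP inverse exists and equals $(M^\dagger M)M^{\coreEP}$.

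The block form \eqref{projectors} of $M^\dagger M$ was written under the hypotheses of Section \ref{sec:core}. Those hypotheses include $x^Ty\neq 0$, which fails here, so I must confirm it still holds. Computing $M^\dagger M$ directly from Lemma \ref{thmMPdoublestar} and \eqref{eq:double-star-canonical} only involves the products $y^*y=u$, $\bar x x^T$, $w^*w=v$ and $\bar z z^T$. The cross terms cancel: for instance, the $(2,1)$ entry is $s^{-1}\bar x\cdot 0+(-s^{-1}av^{-1}\bar x w^*)w\cdot 0$, and the $(2,3)$ entry is $s^{-1}\bar x a - s^{-1}av^{-1}\bar x w^*w=0$. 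Neither $x^Ty$ nor $z^Tw$ enters, so \eqref{projectors} remains valid:
\[
M^\dagger M=\operatorname{diag}\bigl(1,\; s^{-1}\bar x x^T,\; 1,\; t^{-1}\bar z z^T\bigr).
\]

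Next I multiply $\operatorname{diag}(1,s^{-1}\bar xx^T,1,t^{-1}\bar zz^T)$ into $M^{\coreEP}$ from Theorem \ref{thm:core-EP}, block row by block row. Block rows 1 and 3 are unchanged. Block row 2 of $M^{\coreEP}$ is $\bigl(\ovl{a}(br)^{-1}y,\;0,\;0,\;(br)^{-1}yw^*\bigr)$. Multiplying it by $s^{-1}\bar x x^T$ produces the factor $x^Ty=0$ in every entry, so the row becomes zero. Block row 4 is $\bigl(0,\;(ah)^{-1}wy^*,\;\ovl{b}(ah)^{-1}w,\;0\bigr)$. Multiplying it by $t^{-1}\bar z z^T$ produces the factor $z^Tw=0$, so this row also vanishes. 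The result is exactly the stated matrix.

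The only real obstacle is the validity of \eqref{projectors} outside the group-invertible setting. I handle it by the direct recomputation above, noting that $M^\dagger M$ depends only on $s,t,u,v$ and the vectors, not on $x^Ty$ or $z^Tw$. Everything else is routine block multiplication in which the Case I relations $x^Ty=z^Tw=0$ kill the second and fourth block rows.
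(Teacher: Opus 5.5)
Your proposal is correct and follows the paper's own route: write $M^{\dagger,\coreEP}=(M^\dagger M)\,M^{\coreEP}$, use the block form of $M^\dagger M$ from \eqref{projectors} and $M^{\coreEP}$ from Theorem \ref{thm:core-EP}, and observe that $x^Ty=z^Tw=0$ annihilates the second and fourth block rows. Your explicit check that \eqref{projectors} does not depend on $x^Ty$ or $z^Tw$ is a worthwhile addition, because the paper applies that formula in Case I without comment even though it was stated under the group-invertibility hypotheses of Section \ref{sec:core}.
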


\begin{lem}\label{MPCEP_CaseII}  Let $M$ be a matrix  with associated double star digraph $S_{(m+1),(n+1)}$ of the form (\ref{eq:double-star-canonical}) satisfying $x^Ty \neq 0$, $z^Tw = 0$ and $\zeta = x^Ty + ab \neq 0$. Assume that $M$ is Moore--Penrose and core EP invertible (equivalently, the scalars $s,u,t,v,h,\beta$ defined in \eqref{eq:all_scalar} are nonzero). Then the MPCEP inverse of $M$ is given by
$$M^{\dagger,\coreEP} =
 \mxl{cc|cc} 0 & h^{-1}y^\ast & \ovl{b}h^{-1} & 0 \\
(s\beta)^{-1}\ovl{\zeta}(x^T y) \ovl{x} & 0 & 0 & (s\beta)^{-1}\ovl{b}(x^T y)\ovl{x}w^* \\ \hline
\beta^{-1}\ovl{\zeta}b & 0 & 0 & \beta^{-1}b\ovl{b}w^*\\
0 & 0& 0 & 0 \mxr.$$
\end{lem}

\begin{lem}\label{CEPMP_CaseI} Let $M$ be a matrix  with associated double star digraph $S_{(m+1),(n+1)}$ of the form (\ref{eq:double-star-canonical}) with $x^T y=z^T w=0$. Assume that $M^\dagger$ and $M_{\coreEP}$ both exist (equivalently, the scalars $s,u,t,v,p,q$ defined in \eqref{eq:all_scalar} are nonzero). Then the $^\ast\!$CEPMP inverse of $M$ is given by
$$M_{\coreEP,\dagger} =\mxl{cc|cc} 0 & 0 & \ovl{b}p^{-1} & 0 \\ 
q^{-1}\ovl{x} & 0 & 0 & 0 \\ \hline
\ovl{a}q^{-1} & 0 & 0 & 0  \\
0 & 0 & p^{-1}\ovl{z} & 0 \mxr$$
\end{lem}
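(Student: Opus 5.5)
The plan is to apply the explicit representation \eqref{eq:def-cepmp}, $M_{\coreEP,\dagger}=M_{\coreEP}\,M\,M^{\dagger}$, and to evaluate it as the single product $M_{\coreEP}\cdot(MM^{\dagger})$. The hypotheses make every ingredient available. By Lemma~\ref{thmMPdoublestar}, $M^\dagger$ exists exactly when $s,u,t,v\neq 0$. By part~(2) of Theorem~\ref{thm:core-EP}, $M_{\coreEP}$ exists when $p,q\neq 0$, and it has the block form given there. Since $M^\dagger$ exists, the orthogonal projector $MM^\dagger$ has the block-diagonal form recorded in \eqref{projectors}:
\[
MM^\dagger=\operatorname{diag}\bigl(1,\;u^{-1}yy^{*},\;1,\;v^{-1}ww^{*}\bigr).
\]
(I note that \eqref{projectors} is derived only from the formula for $M^\dagger$, so it is valid in Case~I as well.)

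Because $MM^\dagger$ is block diagonal, right multiplication by it acts on the block columns of $M_{\coreEP}$ separately. Block columns $1$ and $3$ are unchanged. Block column $2$ is right-multiplied by $u^{-1}yy^*$, and block column $4$ by $v^{-1}ww^*$.

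The key step is to check which entries of $M_{\coreEP}$ survive.
\begin{itemize}
\item In block column $2$, the nonzero entries are $\ovl{b}(ap)^{-1}x^T$ in row $1$ and $(ap)^{-1}\ovl{z}x^T$ in row $4$. Both end in $x^T$, so after right multiplication they carry the factor $x^Ty=0$ and vanish.
\item In block column $4$, the nonzero entries are $(bq)^{-1}\ovl{x}z^T$ in row $2$ and $\ovl{a}(bq)^{-1}z^T$ in row $3$. Both end in $z^T$, so they carry the factor $z^Tw=0$ and vanish.
\item The entries in block columns $1$ and $3$ stay as they are: $\ovl{b}p^{-1}$ and $p^{-1}\ovl{z}$ in column $3$, and $q^{-1}\ovl{x}$ and $\ovl{a}q^{-1}$ in column $1$.
\end{itemize}
These surviving entries are precisely the claimed matrix.

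There is no real obstacle; the computation is routine. The only point needing care is to use the formula $M_{\coreEP}MM^\dagger$ from \eqref{eq:def-cepmp}, and not the MPCEP-type ordering. It is the Case~I orthogonality relations $x^Ty=0=z^Tw$ that annihilate the off-pattern blocks.
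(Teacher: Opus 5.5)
Your proposal is correct and follows essentially the same route as the paper. The paper also obtains this lemma directly from $M_{\coreEP,\dagger}=M_{\coreEP}\,(MM^\dagger)$, using the block-diagonal projector in \eqref{projectors} and the Case~I formula for $M_{\coreEP}$ from Theorem~\ref{thm:core-EP}; your column-by-column check, where $x^Ty=0$ and $z^Tw=0$ annihilate block columns $2$ and $4$, is exactly the computation the paper leaves implicit.
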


\begin{lem}\label{CEPMP_CaseII} Let $M$ be a matrix  with associated double star digraph $S_{(m+1),(n+1)}$ of the form (\ref{eq:double-star-canonical}) with $x^Ty \neq 0$, $z^Tw = 0$ and $\zeta = x^Ty + ab \neq 0$. Assume that $M^\dagger$ exists and that $M$ is dual core EP
invertible (equivalently, the scalars $s,u,t,v,q,\alpha$ defined in \ref{eq:all_scalar} are nonzero). Then the $^\ast\!$CEPMP inverse of $M$ is given by
$$M_{\coreEP,\dagger} =\mxl{cc|cc} 0 &  \ovl{\zeta}(u\alpha)^{-1}(x^T y)y^\ast & a\ovl{\zeta}{\alpha}^{-1} & 0 \\ 
q^{-1}\ovl{x} & 0 & 0 & 0 \\ \hline
\ovl{a} q^{-1} & 0 & 0 & 0 \\
0 & \ovl{a}(u\alpha)^{-1}(x^T y)\ovl{z}y^* & a\ovl{a}\alpha^{-1}\ovl{z} & 0 \mxr$$
\end{lem}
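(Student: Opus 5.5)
We compute $M_{\coreEP,\dagger}$ from \eqref{eq:def-cepmp}, writing it as $M_{\coreEP,\dagger}=M_{\coreEP}\,(MM^\dagger)$. Under the stated hypotheses, Lemma \ref{thmMPdoublestar} gives the existence of $M^\dagger$ (since $s,u,t,v\neq0$), and Theorem \ref{thm:core-EP_II}(2) gives the existence and explicit form of $M_{\coreEP}$ (since $q,\alpha\neq0$). The block-diagonal projector $MM^\dagger=\operatorname{diag}(1,\,u^{-1}yy^*,\,1,\,v^{-1}ww^*)$ is taken from \eqref{projectors}. We use this factorization rather than multiplying $M_{\coreEP}$, $M$ and $M^\dagger$ separately, because right multiplication by a block-diagonal matrix acts on each block column independently: it leaves the first and third block columns of $M_{\coreEP}$ unchanged, right-multiplies the second block column by $u^{-1}yy^*$, and right-multiplies the fourth by $v^{-1}ww^*$.

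The key steps, in order, are as follows. The first and third block columns are copied directly from Theorem \ref{thm:core-EP_II}. This gives the entries $q^{-1}\ovl{x}$, $\ovl{a}q^{-1}$, $a\ovl{\zeta}\alpha^{-1}$ and $a\ovl{a}\alpha^{-1}\ovl{z}$ in the claimed matrix.

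In the second block column, the nonzero entries of $M_{\coreEP}$ are $\ovl{\zeta}\alpha^{-1}x^T$ and $\ovl{a}\alpha^{-1}\ovl{z}x^T$. Multiplying each by $u^{-1}yy^*$ and using the scalar identity $x^Tyy^*=(x^Ty)y^*$ gives $\ovl{\zeta}(u\alpha)^{-1}(x^Ty)y^*$ and $\ovl{a}(u\alpha)^{-1}(x^Ty)\ovl{z}y^*$.

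In the fourth block column, the nonzero entries are $(q\zeta)^{-1}a\ovl{x}z^T$ and $(q\zeta)^{-1}a\ovl{a}z^T$. Multiplying by $v^{-1}ww^*$ produces the factor $z^Tw$, which equals $0$ in Case II. Both entries therefore vanish, and this accounts for the zero fourth block column in the statement.

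The computation itself is routine. The main point needing care is the justification of the projector form of $MM^\dagger$ in \eqref{projectors}: it follows from Lemma \ref{thmMPdoublestar} by direct multiplication and uses only $s,u,t,v\neq0$, not $x^Ty\neq0$ or $z^Tw\neq0$, so it remains valid in Case II. It is also worth confirming that \eqref{eq:def-cepmp} is exactly the product we evaluate, so that no uniqueness argument is needed. As an optional sanity check, one could verify $XMX=X$ for the resulting $X$.
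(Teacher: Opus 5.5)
Your proposal is correct and follows the paper's route: the paper also obtains this lemma as $M_{\coreEP,\dagger}=M_{\coreEP}(MM^\dagger)$, using the formula for $M_{\coreEP}$ from Theorem~\ref{thm:core-EP_II} and the block-diagonal projector in \eqref{projectors}, with the fourth block column vanishing because $z^Tw=0$. Your remark that the projector formula needs only $s,u,t,v\neq 0$ is a worthwhile addition, since \eqref{projectors} was first displayed under group invertibility and the paper leaves its validity in Case~II implicit.
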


The four lemmas above provide explicit block formulas for the MPCEP and $^\ast\!$CEPMP inverses in Case I (Lemmas \ref{MPCEP_CaseI} and \ref{CEPMP_CaseI}) and Case II (Lemmas \ref{MPCEP_CaseII} and \ref{CEPMP_CaseII}). In Case III, we have $M^{\coreEP}=0$ and $M_{\coreEP}=0$, and therefore $M^{\dagger,\coreEP}=0$
and $M_{\coreEP,\dagger}=0$.

In both Case I and Case II, the MPCEP and $^\ast\!$CEPMP inverses never coincide for matrices associated with double star digraphs. Indeed, inspection of the above formulas
shows that the equality of these two composite inverses in Case I would force all the vectors $x$, $y$, $z$ and $w$ to be zero, contradicting the defining
double star structure. A similar conclusion holds in Case II.


\section{GDC and GC Inverses}
\label{sec:gdcdc-block}

In this section we derive explicit block formulas for the generalized dual core (GDC)
inverse and the generalized core (GC) inverse of matrices associated with double star
digraphs.

These inverses are not independent notions: they are composite outer inverses obtained
by combining the Moore--Penrose inverse with the (dual) core EP inverse, as introduced
in \cite{Mosic2024}. Consequently, their existence does not introduce new algebraic
constraints beyond those already required for Moore--Penrose and core EP
invertibility.

The proofs of the following lemmas are immediate consequences of
\eqref{eq:gdc-explicit} and \eqref{eq:gc-explicit}, combined with the explicit
formulas for $M^\dagger$ from Lemma \ref{thmMPdoublestar} and for
$M^{\coreEP}$ or $M_{\coreEP}$ from Theorems \ref{thm:core-EP}
and \ref{thm:core-EP_II}.

\begin{lem}\label{GDC_CaseI}  Let $M$ be a matrix  with associated double star digraph $S_{(m+1),(n+1)}$ of the form (\ref{eq:double-star-canonical}) with $x^T y=z^T w=0$. Assume that $M$ is Moore--Penrose and core EP
invertible (equivalently, the scalars $s,u,t,v,r,h$ defined in \eqref{eq:all_scalar} are nonzero).
Then the GDC inverse of $M$ is given by
$$M^{GDC} =\mxl{cc|cc} 0 & \ovl{a} (br)^{-1} x^T & b^{-1} & 0 \\
0 & 0 & 0 & 0 \\ \hline
a^{-1} & 0 & 0 & \ovl{b}(ah)^{-1}z^T \\
0 & 0 & 0 & 0 \mxr.$$
\end{lem}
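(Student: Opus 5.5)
The plan is to compute the GDC inverse directly from the explicit representation \eqref{eq:gdc-explicit}, $M^{\mathrm{GDC}}=M^{\dagger}M^{\coreEP}M$. Existence needs no separate argument. By Lemma \ref{thmMPdoublestar}, $M^\dagger$ exists exactly when $s,u,t,v\neq 0$. By Theorem \ref{thm:core-EP}, $M^{\coreEP}$ exists exactly when $r,h\neq 0$. Under these hypotheses the GDC system is consistent and its solution is the triple product. So the whole proof is a block multiplication, and the only choice is the order in which to multiply.

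\textbf{Step 1: the product $M^{\coreEP}M$.} I would first form $M^{\coreEP}M$, using the block form of $M^{\coreEP}$ from Theorem \ref{thm:core-EP} and the canonical form \eqref{eq:double-star-canonical}. The key simplifications are the defining identities
\[
h^{-1}(y^*y+b\ovl{b})=1, \qquad r^{-1}(a\ovl{a}+w^*w)=1 .
\]
With these, the first block row becomes $[\,1,\ 0,\ 0,\ \ovl{b}h^{-1}z^T\,]$. The third block row becomes $[\,0,\ \ovl{a}r^{-1}x^T,\ 1,\ 0\,]$.

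The other two block rows of $M^{\coreEP}$ are rank-one multiples of these. Block row 2 equals $b^{-1}y$ times block row 3 of $M^{\coreEP}$. Block row 4 equals $a^{-1}w$ times block row 1. Hence, in $M^{\coreEP}M$:
\begin{itemize}
\item the second block row is $b^{-1}y\,[\,0,\ \ovl{a}r^{-1}x^T,\ 1,\ 0\,]$;
\item the fourth block row is $a^{-1}w\,[\,1,\ 0,\ 0,\ \ovl{b}h^{-1}z^T\,]$.
\end{itemize}
I would record this factorized structure explicitly, because it drives the rest of the computation.

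\textbf{Step 2: left multiplication by $M^\dagger$.} Next I multiply on the left by $M^\dagger$ from Lemma \ref{thmMPdoublestar}.
\begin{itemize}
\item First block row of $M^\dagger$, namely $[\,0,\ u^{-1}y^*,\ 0,\ 0\,]$: it picks out $u^{-1}y^*\cdot b^{-1}y\,[\dots]$. Since $y^*y=u$, this gives $b^{-1}[\,0,\ \ovl{a}r^{-1}x^T,\ 1,\ 0\,]$, which is the claimed first block row.
\item Third block row of $M^\dagger$: similarly, $v^{-1}w^*w=1$ gives $a^{-1}[\,1,\ 0,\ 0,\ \ovl{b}h^{-1}z^T\,]$, matching $[\,a^{-1},0,0,\ovl{b}(ah)^{-1}z^T\,]$.
\end{itemize}

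\textbf{Step 3: the vanishing block rows.} The step needing care is showing that block rows 2 and 4 of the product are zero. Row 2 of $M^\dagger$ combines $s^{-1}\ovl{x}$ times row 1 of $M^{\coreEP}M$ with $-s^{-1}av^{-1}\ovl{x}w^*$ times row 4. Substituting row 4 $=a^{-1}w\cdot(\text{row }1)$ gives
\[
s^{-1}\ovl{x}\bigl(1-av^{-1}a^{-1}w^*w\bigr)(\text{row }1)=0 .
\]
Row 4 vanishes by the symmetric argument: $\ovl{z}$ and $t^{-1}$ play the roles of $\ovl{x}$ and $s^{-1}$, and the cancellation uses $y^*y=u$ against the $b^{-1}y$ factor.

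This cancellation is the only non-mechanical point. It depends on having kept rows 2 and 4 of $M^{\coreEP}M$ in factorized form rather than expanded, so I would be careful to preserve that structure through Step 1.
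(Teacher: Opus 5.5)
Your proposal is correct and follows the same route as the paper. The paper also obtains $M^{\mathrm{GDC}}$ as a direct block computation of $M^{\dagger}M^{\coreEP}M$, using \eqref{eq:gdc-explicit}, Lemma~\ref{thmMPdoublestar} and Theorem~\ref{thm:core-EP}; your rows of $M^{\coreEP}M$ and the cancellations via $w^*w=v$ and $y^*y=u$ that kill block rows 2 and 4 all check out, and writing those rows in the factorized form $b^{-1}y(\cdot)$ and $a^{-1}w(\cdot)$ is a clean way to organize the multiplication the paper leaves to the reader.
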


\begin{lem}\label{GDC_CaseII} Let $M$ be a matrix  with associated double star digraph $S_{(m+1),(n+1)}$ of the form (\ref{eq:double-star-canonical}) satisfying $x^Ty \neq 0$, $z^Tw = 0$ and $\zeta = x^Ty + ab \neq 0$. Assume that $M$ is Moore--Penrose invertible and $M^{\coreEP}$ exists (equivalently, the scalars $s,u,t,v,h,\beta$ defined in \eqref{eq:all_scalar} are nonzero). Then the GDC inverse of $M$ is given by
$$M^{GDC} =
\mxl{cc|cc} 0 & \beta^{-1}\ovl{\zeta}x^T & \beta^{-1}(a\ovl{\zeta}+\ovl{b}w^\ast w) & 0 \\ 
(s\zeta)^{-1}(x^T y)\ovl{x} & 0 & 0 & \ovl{b}(hs\zeta)^{-1}(x^T y) \ovl{x}z^T \\ \hline
b\zeta^{-1} & 0 & 0 & b\ovl{b}(h\zeta)^{-1}z^T \\
0 & 0 & 0 & 0\mxr.$$
\end{lem}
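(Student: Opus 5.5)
The statement is Lemma \ref{GDC_CaseII}. The plan is to use the explicit identity \eqref{eq:gdc-explicit}, $M^{GDC}=M^\dagger M^{\coreEP}M$. The inputs are the formula for $M^\dagger$ from Lemma \ref{thmMPdoublestar}, which is valid since $s,u,t,v\neq 0$, and the Case II core EP inverse from Theorem \ref{thm:core-EP_II}(1), which is valid since $h,\beta\neq 0$. Existence needs no further argument, because the paper has already established that the GDC system is consistent whenever $M^\dagger$ and $M^{\coreEP}$ both exist. So everything reduces to a block computation, carried out in the $4\times 4$ block partition of \eqref{eq:double-star-canonical}. Throughout we use the Case II relations $z^Tw=0$ and $\zeta=x^Ty+ab$, together with $y^*y=h-b\ovl{b}$ and $w^*w$ as it appears in $\beta=\zeta\ovl{\zeta}+b\ovl{b}w^*w$.

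\textbf{Step 1: compute $N:=M^{\coreEP}M$.} This is the simpler product, since $M$ is very sparse. Right multiplication by $M$ sends block columns as follows: column 1 of $N$ is $(\text{col }2)\,y+(\text{col }3)\,b$, column 2 is $(\text{col }1)\,x^T$, column 3 is $(\text{col }1)\,a+(\text{col }4)\,w$, and column 4 is $(\text{col }3)\,z^T$.
\begin{itemize}
\item Row 1: column 1 gives $h^{-1}(y^*y+\ovl{b}b)=1$, column 4 gives $h^{-1}\ovl{b}z^T$, and the rest vanish.
\item Row 3: column 2 gives $\beta^{-1}\ovl{\zeta}bx^T$, and column 3 gives $\beta^{-1}(\ovl{\zeta}ab+b\ovl{b}w^*w)$.
\item Rows 2 and 4 are computed the same way.
\end{itemize}
The factor $\zeta^{-1}$ in row 4 of $M^{\coreEP}$ combines with $h$ to give terms such as $b\zeta^{-1}w$ and $b\ovl{b}(h\zeta)^{-1}wz^T$. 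These are exactly the entries that reappear in the third block row of the claimed formula.

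\textbf{Step 2: left-multiply by $M^\dagger$.} The rows of $M^\dagger$ are sparse:
\begin{itemize}
\item row 1 of the result is $u^{-1}y^*(\text{row }2\text{ of }N)$;
\item row 2 is $s^{-1}\ovl{x}(\text{row }1)-s^{-1}av^{-1}\ovl{x}w^*(\text{row }4)$;
\item row 3 is $v^{-1}w^*(\text{row }4)$;
\item row 4 is $-t^{-1}bu^{-1}\ovl{z}y^*(\text{row }2)+t^{-1}\ovl{z}(\text{row }3)$.
\end{itemize}
The scalar contractions $y^*y=u$ and $w^*w=v$ cancel the $u^{-1}$ and $v^{-1}$ factors. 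That is why the final formula contains no $u$ or $v$, and $s$ survives only in block row 2. In row 1 we expect to recover $\beta^{-1}\ovl{\zeta}x^T$ and $\beta^{-1}(a\ovl{\zeta}+\ovl{b}w^*w)$. The latter comes from rewriting $\ovl{\zeta}ab+b\ovl{b}w^*w$ after dividing by $b$, which uses that row 2 of $M^{\coreEP}$ is $\ovl{\zeta}y,\ \ovl{b}yw^*$ up to $\beta^{-1}$. In row 2 we use $x^Ty=\zeta-ab$ to combine the $1$-entry from $N$'s row 1 with the $-a$ contribution from row 4, giving the coefficient $(x^Ty)/\zeta$.

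\textbf{Main obstacle: block row 4.} It must vanish, which requires cancellation between $-t^{-1}bu^{-1}\ovl{z}y^*(\text{row }2\text{ of }N)$ and $t^{-1}\ovl{z}(\text{row }3\text{ of }N)$. Since row 2 of $N$ is $y$ times row 3 divided by $b$ (rows 2 and 3 of $M^{\coreEP}$ differ exactly by $y\leftrightarrow b$), we get $y^*y\,u^{-1}=1$, and the two terms cancel exactly. I would verify this proportionality first. Then I would carefully track the $\ovl{b}$ versus $b$ placements and the uses of $z^Tw=0$, which kill cross terms such as $z^T w$ arising in $N$'s column 4 times $M$.
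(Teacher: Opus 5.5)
Your proposal is correct and takes the same route as the paper, which obtains the lemma by direct block multiplication in $M^{GDC}=M^\dagger M^{\coreEP}M$ using Lemma \ref{thmMPdoublestar} and Theorem \ref{thm:core-EP_II}. Your intermediate product $N=M^{\coreEP}M$, the $(x^Ty)/\zeta$ combination in block row 2, and the cancellation in block row 4 (via row 2 of $N$ being $y b^{-1}$ times row 3) all check out; the only inaccuracy is that $z^Tw=0$ is never invoked in this product, since it is already built into the formula for $M^{\coreEP}$.
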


\begin{lem}\label{GC_CaseI} Let $M$ be a matrix  with associated double star digraph $S_{(m+1),(n+1)}$ of the form (\eqref{eq:double-star-canonical}) with $x^T y=z^T w=0$. Assume that $M^\dagger$ exists and that $M$ is dual core EP
invertible (equivalently, the scalars $s,u,t,v,p,q$ defined in \eqref{eq:all_scalar} are nonzero). Then the GC inverse of $M$ is given by
$$M^{GC} =\mxl{cc|cc} 0 & 0 & b^{-1} & 0 \\
\ovl{b}(ap)^{-1}y & 0 & 0 & 0 \\ \hline
a^{-1} & 0 & 0 & 0 \\
0 & 0 & \ovl{a}(bq)^{-1}w & 0 \mxr.$$
\end{lem}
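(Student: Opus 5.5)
The plan is to evaluate the explicit formula \eqref{eq:gc-explicit}, $M^{\mathrm{GC}}=M\,M_{\coreEP}\,M^{\dagger}$. The hypotheses of the lemma give both factors we need: $M^{\dagger}$ comes from Lemma~\ref{thmMPdoublestar}, since $s,u,t,v\neq 0$, and $M_{\coreEP}$ comes from Theorem~\ref{thm:core-EP}(2), since $p,q\neq 0$. Existence is then automatic, as recalled before \eqref{eq:gc-explicit}, so only the block computation remains. I will do it in two stages: first $N:=M\,M_{\coreEP}$, then $N M^{\dagger}$. Throughout I will use the Case~I relations $x^Ty=0=z^Tw$ together with $x^T\ovl{x}=s$, $z^T\ovl{z}=t$, $y^*y=u$, $w^*w=v$ and the definitions $p=b\ovl{b}+t$, $q=a\ovl{a}+s$.

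\emph{Step 1: compute $N=M\,M_{\coreEP}$.} I will multiply the block rows of $M$, namely $[0,x^T,a,0]$, $[y,0,0,0]$, $[b,0,0,z^T]$ and $[0,0,w,0]$, against the block columns of $M_{\coreEP}$. The scalar combinations should collapse exactly through $p$ and $q$. For example, the $(1,1)$ block is $q^{-1}(x^T\ovl{x}+a\ovl{a})=1$, the $(1,4)$ block is $(bq)^{-1}(s+a\ovl{a})z^T=b^{-1}z^T$, and the $(3,3)$ block is $p^{-1}(b\ovl{b}+t)=1$. I expect to obtain
\[
N=\mxl{cc|cc} 1 & 0 & 0 & b^{-1}z^T\\ 0 & \ovl{b}(ap)^{-1}yx^T & \ovl{b}p^{-1}y & 0\\ \hline 0 & a^{-1}x^T & 1 & 0\\ \ovl{a}q^{-1}w & 0 & 0 & \ovl{a}(bq)^{-1}wz^T \mxr .
\]

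\emph{Step 2: compute $N M^{\dagger}$.} I will multiply $N$ by the block formula for $M^{\dagger}$ in Lemma~\ref{thmMPdoublestar}. Here the key mechanism is cancellation of the off-diagonal correction terms $-s^{-1}av^{-1}\ovl{x}w^*$ and $-t^{-1}bu^{-1}\ovl{z}y^*$ of $M^{\dagger}$.

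\begin{itemize}
\item In block row~1, the $(1,2)$ entry is $u^{-1}y^*-b^{-1}z^T\,t^{-1}bu^{-1}\ovl{z}y^*=0$, and the $(1,3)$ entry is $b^{-1}t^{-1}z^T\ovl{z}=b^{-1}$.
\item In block row~3, the $(3,4)$ entry is $v^{-1}w^*-a^{-1}x^T s^{-1}av^{-1}\ovl{x}w^*=0$, and the $(3,1)$ entry is $a^{-1}s^{-1}x^T\ovl{x}=a^{-1}$.
\item In block rows~2 and~4, the factors $x^T\ovl{x}$ and $z^T\ovl{z}$ turn $yx^T\cdot s^{-1}\ovl{x}$ into $y$ and $wz^T\cdot t^{-1}\ovl{z}$ into $w$. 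This should yield the $(2,1)$ entry $\ovl{b}(ap)^{-1}y$ and the $(4,3)$ entry $\ovl{a}(bq)^{-1}w$.
\item The remaining blocks in rows~2 and~4 must vanish by the same two-term cancellations. For instance, the $(2,4)$ entry is $-\ovl{b}(ap)^{-1}s^{-1}av^{-1}yx^T\ovl{x}w^*+\ovl{b}p^{-1}v^{-1}yw^*=0$.
\end{itemize}

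The main obstacle is purely bookkeeping. One must track which scalar factors ($a,b,s,t,u,v,p,q$) survive in each block, and check that every pair of terms that ought to cancel really does after the substitutions $x^T\ovl{x}=s$ and $z^T\ovl{z}=t$. To guard against slips, I will finish with a sanity check using the defining system of the GC inverse. Specifically, I will verify that $XM=MM_{\coreEP}$, which amounts to comparing $M^{\mathrm{GC}}M$ with the matrix $N$ of Step~1; this is cheap to compute and detects most errors.
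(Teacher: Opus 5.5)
Your proposal is correct and follows the paper's route exactly: the paper also obtains $M^{\mathrm{GC}}$ by direct block evaluation of $M\,M_{\coreEP}\,M^{\dagger}$, using Lemma~\ref{thmMPdoublestar} and Theorem~\ref{thm:core-EP}(2). Your intermediate matrix $N=M\,M_{\coreEP}$ is right, and the cancellations you list in $NM^{\dagger}$ check out and give the stated formula; the sanity check $M^{\mathrm{GC}}M=N$ also holds.
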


\begin{lem}\label{GC_CaseII} Let $M$ be a matrix  with associated double star digraph $S_{(m+1),(n+1)}$ of the form (\ref{eq:double-star-canonical}) with $x^Ty \neq 0$, $z^Tw = 0$ and $\zeta = x^Ty + ab \neq 0$. Assume that $M$ is Moore--Penrose and dual core EP
invertible (equivalently, the scalars $s,u,t,v,q,\alpha$ defined in \ref{eq:all_scalar} are nonzero). Then the GC inverse of $M$ is given by
$$M^{GC}=\mxl{cc|cc} 0 & (u\zeta)^{-1}(x^T y)y^\ast & a\zeta^{-1} & 0 \\
\alpha^{-1}\ovl{\zeta}y & 0 & 0 & 0 \\ \hline
\alpha^{-1}(b\ovl{\zeta}+\ovl{a}z^\ast z) & 0 & 0 & 0 \\
0 & \ovl{a}(uq\zeta)^{-1}(x^T y)wy^\ast & a\ovl{a}(q\zeta)^{-1}w & 0 
\mxr.$$
\end{lem}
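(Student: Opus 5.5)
The plan is to compute $M^{GC}=M\,M_{\coreEP}\,M^{\dagger}$ directly, using \eqref{eq:gc-explicit}. Existence is immediate: $M^\dagger$ exists iff $s,u,t,v\neq0$ by Lemma \ref{thmMPdoublestar}, and $M_{\coreEP}$ exists iff $q,\alpha\neq0$ by Theorem \ref{thm:core-EP_II}. So only the block formula needs proof. I would multiply in two stages, first forming $N:=MM_{\coreEP}$ and then $NM^\dagger$, rather than using the projector $MM^\dagger$ from \eqref{projectors}, since $M_{\coreEP}$ sits between $M$ and $M^\dagger$.

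For the first stage, I would multiply the canonical form \eqref{eq:double-star-canonical} by the block formula for $M_{\coreEP}$ in Theorem \ref{thm:core-EP_II}. I expect $N$ to keep the checkerboard sparsity pattern: nonzero blocks in positions $(1,1),(1,4),(2,2),(2,3),(3,2),(3,3),(4,1),(4,4)$. The simplifications to use are:
\begin{itemize}
\item $x^T\ovl{x}=\ovl{s}=s$ and $z^T\ovl z = t$, so that $q=a\ovl a+s$ appears and cancels $q^{-1}$;
\item $z^Tw=0$, which kills several cross terms;
\item $\zeta = x^Ty+ab$, used to recombine the terms $\ovl\zeta\alpha^{-1}(ab + x^Ty)$.
\end{itemize}
For instance, the $(1,1)$ block becomes $x^Tq^{-1}\ovl x + a\ovl a q^{-1}=1$, and the $(3,1)$ block becomes $b\ovl\zeta\alpha^{-1}+\ovl a\alpha^{-1}z^T\ovl z$.

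For the second stage, I would multiply $N$ by $M^\dagger$ from Lemma \ref{thmMPdoublestar}. $M^\dagger$ has its nonzero blocks in the same anti-pattern, so the product again has nonzero blocks only in the positions appearing in the claimed matrix. Each entry then reduces using the identities $y^*y=u$, $w^*w=v$ and $w^*w\,v^{-1}=1$. These produce the factors $(u\zeta)^{-1}(x^Ty)$ and $(uq\zeta)^{-1}(x^Ty)$. The vanishing of the $(2,4)$ and $(3,4)$ blocks should follow from cancellation of the form $\ovl{x}w^*-\ovl{x}w^*$, coming from the $-s^{-1}av^{-1}\ovl x w^*$ entry of $M^\dagger$ against $s^{-1}\ovl x$ times the $(1,4)$ entry of $N$.

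The main obstacle will be the bookkeeping of the scalar identities involving $\alpha=\zeta\ovl\zeta+a\ovl a t$ and $\zeta$. The key step is checking that the mixed terms combine into the stated $(3,1)$ entry $\alpha^{-1}(b\ovl\zeta+\ovl a z^*z)$. It also requires checking that the $(1,3)$ entry collapses to $a\zeta^{-1}$, via $\ovl\zeta\alpha^{-1}\zeta+ a\ovl a t\alpha^{-1}=1$ after factoring. I would verify each block separately and, as a sanity check, confirm the defining relation $XA=AA_{\coreEP}$, i.e.\ $M^{GC}M=N$.
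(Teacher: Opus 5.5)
Your route is the paper's: it also just evaluates $M^{GC}=MM_{\coreEP}M^{\dagger}$ from \eqref{eq:gc-explicit}, using Theorem~\ref{thm:core-EP_II} and Lemma~\ref{thmMPdoublestar}. Carrying out your two-stage multiplication does produce the stated matrix. However, several of the intermediate facts you forecast are wrong, so you should not go looking for them. With $\gamma:=\alpha^{-1}(b\ovl{\zeta}+\ovl{a}t)$ one finds
\[
N=MM_{\coreEP}=\begin{bmatrix}
1 & 0 & 0 & \zeta^{-1}az^T\\
0 & \ovl{\zeta}\alpha^{-1}yx^T & a\ovl{\zeta}\alpha^{-1}y & 0\\
0 & \gamma x^T & a\gamma & 0\\
\ovl{a}q^{-1}w & 0 & 0 & a\ovl{a}(q\zeta)^{-1}wz^T
\end{bmatrix}.
\]
This uses only $x^T\ovl{x}=s$, $z^T\ovl{z}=t$ and $q=a\ovl{a}+s$. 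Neither $z^Tw=0$ (already built into $M_{\coreEP}$) nor $\zeta=x^Ty+ab$ is needed here. In particular the $(3,1)$ block of $N$ is zero, consistent with your own sparsity pattern. The scalar $b\ovl{\zeta}+\ovl{a}t$ lives in $N_{32}$ and $N_{33}$. It reaches the $(3,1)$ block of $M^{GC}$ only as $N_{32}M^\dagger_{21}=\gamma\,x^Ts^{-1}\ovl{x}=\gamma$, with no further mixing.

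In the second stage no products $y^*y$ or $w^*w$ arise. The identities needed are again $x^T\ovl{x}=s$ and $z^T\ovl{z}=t$, plus $\zeta-ab=x^Ty$. The $(1,2)$ block is $N_{11}M^\dagger_{12}+N_{14}M^\dagger_{42}=u^{-1}(1-\zeta^{-1}ab)y^*=(u\zeta)^{-1}(x^Ty)y^*$. The $(1,3)$ block is simply $N_{14}M^\dagger_{43}=\zeta^{-1}a\,z^Tt^{-1}\ovl{z}=a\zeta^{-1}$; $\alpha$ plays no role there. The $(2,4)$ and $(3,4)$ blocks do not vanish through $s^{-1}\ovl{x}$ meeting $N_{14}$; that product belongs to $M^\dagger N$, not $NM^\dagger$. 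Instead, block rows $2$ and $3$ of $N$ are left multiples of $[\,0\ \ x^T\ \ a\ \ 0\,]$, the first block row of $M$, and $[\,0\ \ x^T\ \ a\ \ 0\,]M^\dagger=[\,1\ \ 0\ \ 0\ \ 0\,]$ by \eqref{projectors}. Similarly, block row $4$ of $N$ is $\ovl{a}q^{-1}w$ times block row $1$, which gives the last row of $M^{GC}$ at once. The identity $y^*y=u$ does appear, but only in your sanity check $M^{GC}M=N$.
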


The four lemmas above provide explicit block formulas for the generalized dual core
and generalized core inverses in Case I (Lemmas \ref{GDC_CaseI} and \ref{GC_CaseI}) and Case II (Lemmas \ref{GDC_CaseII} and \ref{GC_CaseII}). As before, Case III
corresponds to a degenerate configuration, and it follows immediately that
$M^{\mathrm{GDC}} = 0$ and $M^{\mathrm{GC}} = 0$ in this case.

As in the analysis of the MPCEP and $^\ast\!$CEPMP inverses, the GDC and GC inverses of matrices associated with double star digraphs never coincide in either Case I or Case II.
Indeed, a direct comparison of the explicit formulas in Case I shows that the
identity $M^{\mathrm{GDC}} = M^{\mathrm{GC}}$ would force $x = y = z = w = 0$, which
contradicts the defining structure of a double star digraph. An analogous
contradiction arises when assuming $M^{\mathrm{GDC}} = M^{\mathrm{GC}}$ in Case II.


\section{Summary of existence conditions}
\label{sec:conditions}

For the reader's convenience, Table \ref{tab:existence-conditions} summarizes the existence
conditions for all generalized inverses of matrices associated with double star digraphs studied in this paper, restricted to the nontrivial situations in which the group inverse does not exist and the Drazin inverse is nonzero. The table collects the conditions obtained in Sections \ref{sec:coreEP}--\ref{sec:gdcdc-block} for Cases I and II, highlighting the dependence of each generalized inverse on the structural parameters defined in \eqref{eq:all_scalar}.

\begin{table}[ht]
\centering
\caption{Existence conditions for the generalized inverses of matrices associated with double star digraphs in the non-group-invertible Cases I and II, expressed in terms of the structural parameters defined in \eqref{eq:all_scalar}\vspace{0.4cm}.
}

\label{tab:existence-conditions}
\begin{tabular}{lcc}
\hline
\textbf{Inverse} & \textbf{Case I} & \textbf{Case II} \\ [2pt]\hline
\( M^\dagger \) & \( s, u, t, v \neq 0 \) & \( s, u, t, v \neq 0 \) \\[2pt]
\( M^{\coreEP} \) & \( r, h \neq 0 \) & \( h, \beta \neq 0 \) \\[2pt]
\( M_{\coreEP} \) & \( p, q \neq 0 \) & \( q, \alpha \neq 0 \) \\[2pt]
\( M^{\dagger,\coreEP} \) & \( s, u, t, v, r, h \neq 0 \) & \( s, u, t, v, h, \beta \neq 0 \) \\[2pt]
\( M_{\coreEP,\dagger} \) & \( s, u, t, v, p, q \neq 0 \) & \( s, u, t, v, q, \alpha \neq 0 \) \\[2pt]
\( M^{\mathrm{GDC}} \) & \( s, u, t, v, r, h \neq 0 \) & \( s, u, t, v, h, \beta \neq 0 \) \\[2pt]
\( M^{\mathrm{GC}} \) & \( s, u, t, v, p, q \neq 0 \) & \( s, u, t, v, q, \alpha \neq 0 \) \\[2pt]
\hline
\end{tabular}
\end{table}

\section*{Acknowledgements}
{This research was partially financed by Portuguese Funds
through FCT (Funda\c c\~ao para a Ci\^encia e a Tecnologia) within the Project UID/00013/2025.}
\section*{Data Availability}
No datasets were generated or analyzed during the current study. All results are theoretical and fully contained within the manuscript.
\section*{Conflict of interest}
The authors declare that they have no conflict of interest.

\bibliographystyle{plain}
\bibliography{BibliographyCoreEP}

\bigskip
\end{document}